\title[Flow matching for  control systems]{Flow matching for stochastic linear control systems}
\author{%
 \Name{Yuhang Mei} \Email{yuhangm@uw.edu}\\
 \addr Department of Aeronautics and Astronautics, University of Washington, Seattle, WA
 \AND
 \Name{Mohammad Al-Jarrah} \Email{mohd9485@uw.edu}\\
 \addr Department of Aeronautics and Astronautics, University of Washington, Seattle, WA
  \AND
 \Name{Amirhossein Taghvaei} \Email{amirtag@uw.edu}\\
 \addr Department of Aeronautics and Astronautics, University of Washington, Seattle, WA
 \AND
  \Name{Yongxin Chen} \Email{ychen3148@gatech.edu}\\
 \addr School of Aerospace Engineering, Georgia Institute of Technology, Atlanta, GA%
}
\newcommand{\ud}{\,\mathrm{d}}
\newcommand{\trace}{{Tr}}
\renewcommand{\tilde}{\widetilde}
\renewcommand{\Re}{\mathbb{R}}
\newcommand{\argmin}{\mathop{\operatorname{argmin}}}
\newcommand{\Pinit}{P_{\text{initial}}}
\newtheorem{assumption}{Assumption}
\newtheorem{problem}{Problem}
\begin{document}

\maketitle

\begin{abstract}%
This paper addresses the problem of steering an initial probability distribution to a target probability distribution through a deterministic or stochastic linear control system. Our proposed approach is inspired by the flow matching methodology, with the difference that we can only affect the flow through the given control channels. The motivation comes from applications such as robotic swarms and stochastic thermodynamics, where agents or particles can only be manipulated through control actions. The feedback control law that achieves the task  is characterized as the conditional expectation of the control inputs for the stochastic bridges that respect the given control system dynamics. Explicit forms are derived for special cases, and a numerical procedure is presented to approximate the control law, illustrated with  examples. 
\end{abstract}

\begin{keywords}%
Flow matching,  Stochastic control, Mean-field control
\end{keywords}

\section{Introduction}

Flow matching has recently gained attention as a promising method for generative modeling due to its simplicity and flexibility~\citep{lipman2022flow,liu2022flow,albergo2022building,tong2023improving}. From a control-theoretic perspective, the methodology can be understood as follows. Consider the control system:
\begin{equation}\label{eq:simple-sys}
\frac{\ud  X_t}{\ud t} = u_t,\quad X_0 \sim \Pinit,  
\end{equation}
where $\{X_t\in \Re^n;0\leq t\leq 1\}$ is the state, $\{u_t\in \Re^n;0\leq t\leq 1\}$ is the control input, and $ \Pinit$ is the distribution of the initial state $X_0$. The control objective is to find a control input $u_t$ such that the terminal state $X_1$ follows a desired target distribution $P_{\text{target}}$. Flow matching offers a straightforward solution.  First, a probability flow $\{P_t;0\leq t\leq 1\}$ is constructed on the space of probability distributions. This flow is chosen to interpolate between the initial and target distributions, i.e. $P_0= \Pinit$ and $P_1=P_{\text{target}}$, and is easy to sample from. A standard choice for $P_t$ is the probability law of the linear interpolation process $X^z_t=(1-t)x + ty$ where $z=(x,y)\sim  \Pi:=\Pinit \otimes P_{\text{target}}$.  Then, the control input $u_t$ is identified so that the probability of $X_t$, and $P_t$, both satisfy the same continuity equation. The resulting control input takes the form $u_t=k(t,X_t)$ where the feedback control law $k:[0,1]\times \Re^n \to \Re^n $ has the probabilistic representation \[k(t,\xi)=\mathbb E[\frac{\ud X^z_t}{\ud t} |X^z_t=\xi],\quad \forall (t,\xi) \in [0,1]\times \Re^n.\] Through this procedure, the probability law of $X_t$ matches $P_t$, for all $t\in[0,1]$, achieving the control objective $X_1\sim P_1=P_{\text{target}}$. 
A key computational advantage of flow matching is that the feedback control law $k(t,\cdot)$ can be numerically approximated by solving a least-squares regression problem:
\begin{equation*}
    \min_k\,\mathbb E_{z \sim \Pi}[\|k(t,X^z_t) - \frac{\ud X^z_t}{\ud t} \|^2].
\end{equation*}

The aim of this paper is to extend the flow matching methodology to the general control setting where the simple control system~\eqref{eq:simple-sys} is replaced by a general deterministic or stochastic linear control system of the form~\eqref{eq:lin-dyn-det} or~\eqref{eq:lin-dyn-stoch}.  The notable difference from traditional flow matching is that here, adjustments to the differential equation are limited to control inputs, a constraint arising from engineering applications such as robotic swarms~\citep{chen2020mean,9735297,elamvazhuthi2019mean,liu2018mean} or stochastic thermodynamic systems~\citep{sekimoto2010stochastic,peliti2021stochastic,seifert2012stochastic,chen2019stochastic,fu2021maximal,movilla2023}, where agents or particles can only be manipulated through control actions.

The problem of controlling probability distributions has a rich history in control theory, dating back to Roger Brockett’s work on the control of Liouville  equations~\citep{brockett2007optimal,brockett2012notes}. Interest in this area has expanded due to its connections with mean-field games~\citep{huang2006large,lasry2007mean,chen2018steering}, mean-field control~\citep{bensoussan2013mean,carmona2018probabilistic,fornasier2014mean}, optimal transportation/Schr\"odinger bridge problem~\citep{chen2016optimal,chen2016relation,9491012,haasler2021control,zhou2021optimal,chen2023density}. 

Namely, our work is closely related to~\cite{chen2015optimal} which derives the optimal feedback control law that steers a stochastic linear control system from an initial Gaussian distribution to a Gaussian target distribution in an optimal manner. The flow matching methodology presented here generalizes the framework to non-Gaussian distributions,  though it no longer guarantees optimality. Our work is also closely related to~\cite{liu2023generalized}  where flow matching  is used to solve the generalized schr\"odinger bridge problem in an alternating optimization scheme.  The difference in our setup is constraining the dynamics to linear control systems of the form~\eqref{eq:lin-dyn-det} or~\eqref{eq:lin-dyn-stoch} and forgoing optimality.  While some notion of optimality could be introduced by designing an optimal coupling between the initial and target distributions (e.g., using the Sinkhorn algorithm for optimal sample pairing), this is not the focus of our work.

This paper is organized as follows. Section~\ref{sec:interpolation} presents interpolations over deterministic and stochastic linear control system.  Section~\ref{sec:flow-mathcing} presents the generalization of the flow matching methodology to stochastic linear control systems, followed by the analytical derivation of the control law for special cases of Gaussian and mixture of Gaussian target distribution. Finally, Section~\ref{sec:numerics} presents a numerical procedure which is demonstrated with the aid of several examples.

\section{Background on interpolation through linear control systems}\label{sec:interpolation}

 In this section, we present interpolations that satisfy a given deterministic or stochastic linear control system. 
  \subsection{Deterministic linear control system} 
  Consider the linear control system
  \begin{align}\label{eq:lin-dyn-det}
  	\frac{\ud X_t}{\ud t} = AX_t + Bu_t,
  \end{align}
where $X_t \in \mathbb R^n$ is the state and $u_t \in \mathbb R^m$ is the control input, at time $t$. We consider the following control problem.  

\begin{problem} \label{problem:cont-det} Given a pair of points $(x,y) \in \mathbb R^n \times \mathbb R^n$, find a trajectory $\{X_t;t\in[0,1]\}$ such that  $X_0=x$, $X_1=y$, and~\eqref{eq:lin-dyn-det} is satisfied for some control input $\{u_t;t\in[0,1]\}$.  
\end{problem}
This is a standard problem in control theory, forming the basis for controllability analysis of linear systems, e.g. see~\cite[Ch. 5]{basar2020lecture}.  
In order to solve this problem, it is useful to define the controllability Gramian 
	\begin{align*}
		\Phi_t := \int_0^te^{(t-s)A}BB^\top e^{(t-s)A^\top}\ud s,\quad \text{for}\quad t\in[0,1],
	\end{align*}
and make the following assumption about the system. 

\begin{assumption}\label{assumption:controllable}
    The pair $(A,B)$ is controllable. That is to say, the matrix $[B,AB,A^2B,\ldots,A^{n-1}B]$ is full-rank. 
\end{assumption}
Under the controllability Assumption~\ref{assumption:controllable}, it is known that the Gramian matrix $\Phi_t$ is non-singular for all $t > 0$~\citep[Th. 5.1]{basar2020lecture}.  
The following proposition states a solution to Problem~\ref{problem:cont-det}. The proof is standard and omitted on account of space.

\begin{proposition}
Problem \ref{problem:cont-det} is solved with the control input 
\begin{align}\label{eq:u-det}
		u_t &= B^\top e^{(1-t)A^\top}\Phi_1^{-1}(y-e^Ax),
	\end{align} 
    resulting into the interpolating trajectory 
	\begin{align}\label{eq:det-interpolation}
		X_t =  e^{tA}x + \Phi_t  e^{(1-t)A^\top}\Phi_1^{-1}(y-e^Ax),\quad \text{for}\quad t\in[0,1]. 
	\end{align}  
    Moreover,~\eqref{eq:u-det} is the control input with minimum $L_2$-norm  $\int_0^1\|u_t\|^2\ud t$ among all the control inputs that solve Problem~\ref{problem:cont-det}. 
\end{proposition}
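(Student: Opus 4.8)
The plan is to split the statement into two independent claims: first, that the proposed $u_t$ is a feasible control steering $x$ to $y$ and yields the stated trajectory~\eqref{eq:det-interpolation}; and second, that it minimizes the $L_2$-energy among all feasible controls. The first claim is a direct computation with the variation-of-constants formula, and the second is a Hilbert-space orthogonality (projection) argument.

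For feasibility, I would start from the variation-of-constants solution of~\eqref{eq:lin-dyn-det},
\[
X_t = e^{tA}x + \int_0^t e^{(t-s)A}Bu_s\,\ud s,
\]
and substitute $u_s = B^\top e^{(1-s)A^\top}\Phi_1^{-1}(y-e^Ax)$. The factor $\Phi_1^{-1}(y-e^Ax)$ is constant in $s$ and pulls out of the integral, leaving $\int_0^t e^{(t-s)A}BB^\top e^{(1-s)A^\top}\,\ud s$. The key algebraic step is the semigroup identity $e^{(t-s)A^\top}e^{(1-t)A^\top}=e^{(1-s)A^\top}$, valid since both factors are functions of the single matrix $A^\top$ and hence commute, which rewrites this integral as $\Phi_t e^{(1-t)A^\top}$ and recovers exactly~\eqref{eq:det-interpolation}. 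Finally I check the boundary conditions: $\Phi_0=0$ gives $X_0=x$, and at $t=1$ the factor $\Phi_1 e^{0}\Phi_1^{-1}$ collapses to the identity, giving $X_1=e^Ax+(y-e^Ax)=y$; here the nonsingularity of $\Phi_1$ guaranteed by Assumption~\ref{assumption:controllable} is what makes the formula well defined.

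For optimality, let $u_t$ be any feasible control and write $u^*_s$ for the proposed one. Both hit the terminal state, so subtracting the two terminal conditions gives the constraint $\int_0^1 e^{(1-s)A}B(u_s-u^*_s)\,\ud s = 0$. I would then expand $\int_0^1\|u_s\|^2\ud s = \int_0^1\|u^*_s + (u_s-u^*_s)\|^2\ud s$ and show the cross term vanishes: since $(u^*_s)^\top = (y-e^Ax)^\top\Phi_1^{-1}e^{(1-s)A}B$, using that the Gramian $\Phi_1$ is symmetric so that $\Phi_1^{-\top}=\Phi_1^{-1}$, the cross term factors as $(y-e^Ax)^\top\Phi_1^{-1}\big(\int_0^1 e^{(1-s)A}B(u_s-u^*_s)\,\ud s\big)$, which is zero by the constraint. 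This leaves $\int_0^1\|u_s\|^2\ud s = \int_0^1\|u^*_s\|^2\ud s + \int_0^1\|u_s-u^*_s\|^2\ud s \ge \int_0^1\|u^*_s\|^2\ud s$, with equality iff $u=u^*$ almost everywhere, establishing minimality.

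I do not anticipate a genuine obstacle, as both parts are classical linear-systems results; the only points requiring care are the commuting-exponentials manipulation that collapses the substituted integral into $\Phi_t e^{(1-t)A^\top}$, and the explicit use of the symmetry of the Gramian to make the cross term vanish in the energy decomposition. The structural reason behind optimality worth highlighting is that $u^*$ lies in the range of the adjoint reachability map $s\mapsto B^\top e^{(1-s)A^\top}$, which is exactly the $L_2$-orthogonal complement of the subspace of controls steering $x$ to the origin, so $u^*$ is the orthogonal projection of the origin onto the feasible affine set.
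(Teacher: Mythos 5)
Your proof is correct and complete; the paper itself omits the proof of this proposition, stating only that it is ``standard,'' and your argument --- variation of constants plus the semigroup identity to verify feasibility and the boundary conditions, followed by the orthogonality/Pythagorean decomposition using the symmetry of $\Phi_1$ for minimum energy --- is exactly that standard argument. Nothing is missing.
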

\begin{remark}
    It is useful to express the control input~\eqref{eq:u-det} as a feedback control law $u_t=k(t,X_t)$ where 
    \begin{align}\label{eq:u-det-feedback}
        k(t,\xi) = B^\top e^{(1-t)A^\top} \Phi_{1-t}^{-1} (y-e^{(1-t)A}\xi),\quad \forall (t,\xi) \in [0,1]\times \Re^n. 
    \end{align}
    This is obtained by using~\eqref{eq:det-interpolation} to solve for $x$, in terms of $X_t$ and $y$, and substituting the result in~\eqref{eq:u-det}. The feedback control law steers the system from any given initial point to $y$. Surprisingly, the same feedback control law achieves this task in the stochastic setting. 
\end{remark}
\begin{remark}
The interpolation formula can be generalized to a linear time-varying system $\frac{\ud X_t}{\ud t} = A_tX_t + B_tu_t$ by replacing $e^{(t-s)A}$ by the corresponding state transition matrix $\Psi_{t,s}$ where $\frac{\ud \Psi_{t,s}}{\ud t} = A_t \Psi_{t,s}$, $\Psi_{s,s}=I$, for all $t\geq s\geq 0$, and replacing $B$ by its time-varying counterpart $B_t$.   
\end{remark}

  \subsection{Stochastic linear control system} 
   Consider the stochastic linear control system:
\begin{align}\label{eq:lin-dyn-stoch}
  	\ud X_t = AX_t \ud t+ B(u_t\ud t + \epsilon\ud W_t),
  \end{align}
where $\{W_t\}_{t\geq 0}$ is  $n$-dimensional Brownian motion and $\epsilon\in \mathbb R$. Let $\mathcal F_t:=\sigma(W_s;0\leq s\leq t)$ denote the filtration generated by the Brownian motion. 

\begin{problem} \label{problem:cont-stoch} For any pair $z=(x,y)\in \mathbb R^n \times \mathbb R^n$, find a stochastic trajectory $\{X_t;t\in[0,1]\}$ such that  $X_0=x$, $X_1=y$, and~\eqref{eq:lin-dyn-det} is satisfied for some control input $\{u_t;t\in[0,1]\}$ that is $\mathcal F_t$-adapted.
\end{problem}
Problem~\ref{problem:cont-stoch} can be solved by constructing stochastic bridges, that is to say, the uncontrolled process $X_t$ conditioned at its end-points, $X_0=x$ and $X_1=y$. These stochastic bridges have been developed for general non-degenerate diffusions~\citep{dai1991stochastic,pavon1989stochastic,pavon1991free} and for degenerate diffusion of the type~\eqref{eq:lin-dyn-stoch} in~\cite{chen2015stochastic}, using a stochastic optimal control formulation of the problem~\ref{problem:cont-stoch}. Here, we present an alternative approach using the time-reversal methodology~\citep{anderson1982reverse,haussmann1986time} and derive the formula for the feedback control law that solves problem~\ref{problem:cont-stoch}.  We present this approach for its simplicity and flexibility, while it should be noted that the results are the same as in~\cite{chen2015stochastic}. 
\begin{proposition}
 Problem~\ref{problem:cont-stoch} is solved with the feedback control law 
 \begin{align}\label{eq:u-stoch}
	u_t &= B^\top e^{(1-t)A^\top} \Phi_{1-t}^{-1} (y-e^{(1-t)A}X_t). 
\end{align}
The marginal probability of the resulting trajectory  is 
	 \begin{align}\label{eq:stoch-interpolation}
	 	X_t \overset{\text{d}}{=}  e^{tA}x + \Phi_t  e^{(1-t)A^\top}\Phi_1^{-1}(y-e^Ax) + \epsilon \Sigma_t^{\frac{1}{2}}Z,
	 \end{align} 
 where $Z$ is normal Gaussian, $\overset{\text{d}}{=}$ means equality in distribution, and
 \begin{equation*}
 	\Sigma_t := \Phi_t - \Phi_t e^{(1-t)A^\top} \Phi_1^{-1}e^{(1-t)A} \Phi_t.
 \end{equation*}
\end{proposition}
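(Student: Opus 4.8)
The plan is to prove the proposition by direct verification: substitute the proposed feedback law \eqref{eq:u-stoch} into the dynamics \eqref{eq:lin-dyn-stoch} and analyze the resulting closed-loop process, rather than going through the time-reversal construction (which I regard as the conceptual origin of the formula but a less self-contained route). Setting $u_t=k(t,X_t)$ with the gain from \eqref{eq:u-det-feedback}, the system becomes a linear, time-inhomogeneous SDE with additive noise,
\begin{align*}
\ud X_t &= \bigl(M_t X_t + c_t\bigr)\ud t + \epsilon B \ud W_t, \\
M_t &:= A - BB^\top e^{(1-t)A^\top}\Phi_{1-t}^{-1} e^{(1-t)A}, \quad c_t := BB^\top e^{(1-t)A^\top}\Phi_{1-t}^{-1} y.
\end{align*}
Since the drift is affine in $X_t$ and the initial condition $X_0=x$ is deterministic, the solution is a Gaussian process, so its law is completely determined by its mean $m_t := \Expect[X_t]$ and covariance $V_t := \Expect[(X_t-m_t)(X_t-m_t)^\top]$. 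It therefore suffices to compute these two moments and match them against the mean and covariance appearing in \eqref{eq:stoch-interpolation}.

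I would first observe that the mean satisfies $\dot m_t = M_t m_t + c_t$ with $m_0=x$, which is precisely the closed-loop ODE obtained by applying the deterministic feedback \eqref{eq:u-det-feedback} to \eqref{eq:lin-dyn-det}. Hence $m_t$ coincides with the deterministic interpolation \eqref{eq:det-interpolation}, and in particular $m_1=y$ by construction of that feedback. The covariance obeys the Lyapunov ODE
\begin{equation*}
\dot V_t = M_t V_t + V_t M_t^\top + \epsilon^2 BB^\top, \quad V_0 = 0,
\end{equation*}
and the core of the proof is to check that $V_t=\epsilon^2\Sigma_t$ solves it. I would do this by differentiating the defining expression for $\Sigma_t$, using the Gramian identity $\dot\Phi_t = A\Phi_t + \Phi_t A^\top + BB^\top$ (immediate from the integral definition of $\Phi_t$) together with $\tfrac{\ud}{\ud t}e^{(1-t)A}=-Ae^{(1-t)A}$, and verifying that the cross terms recombine with the $M_t$-terms. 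A useful sanity check and conceptual shortcut is that $\epsilon^2\Sigma_t$ is exactly the Schur-complement covariance of the uncontrolled process \eqref{eq:lin-dyn-stoch} (with $u\equiv 0$) conditioned on its endpoints $X_0=x$, $X_1=y$: a direct Gaussian conditioning yields conditional mean \eqref{eq:det-interpolation} and conditional covariance $\epsilon^2\Sigma_t$, which both identifies the bridge marginal and anticipates the formula to be verified.

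Evaluating at the terminal time gives $\Sigma_1 = \Phi_1 - \Phi_1\Phi_1^{-1}\Phi_1 = 0$, so $X_1$ is Gaussian with mean $y$ and zero covariance, i.e. $X_1=y$ almost surely; together with $X_0=x$ this shows that \eqref{eq:u-stoch} solves Problem~\ref{problem:cont-stoch}. The marginal \eqref{eq:stoch-interpolation} then follows by expressing the Gaussian law of $X_t$ through its mean \eqref{eq:det-interpolation} and covariance $\epsilon^2\Sigma_t$, with $Z$ standard normal and $\Sigma_t^{\frac{1}{2}}$ a symmetric square root.

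I expect two points to require care. The first, and the main computational obstacle, is the Lyapunov verification: the differentiation of $\Phi_t e^{(1-t)A^\top}\Phi_1^{-1}e^{(1-t)A}\Phi_t$ and its recombination with the $M_t$-terms is where the full structure of the Gramian must be used, and it is worth carrying out carefully to confirm that the cross terms cancel exactly. The second is that the feedback gain $\Phi_{1-t}^{-1}$ becomes singular as $t\to 1^-$ (since $\Phi_s\to 0$ as $s\to 0^+$), the standard singularity of a bridge. I would handle this by solving the SDE on each interval $[0,1-\eta]$, where the coefficients are bounded and the strong solution is unique, and then letting $\eta\downarrow 0$: because $m_t\to y$ and $V_t=\epsilon^2\Sigma_t\to 0$, the continuous version of $X_t$ converges to $y$ in $L^2$, making the pinning $X_1=y$ rigorous.
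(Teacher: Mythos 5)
Your proposal is correct, but it takes a genuinely different route from the paper. The paper \emph{derives} the result: the marginal~\eqref{eq:stoch-interpolation} is obtained by Gaussian conditioning of the uncontrolled process on $X_1=y$ (exactly your ``sanity check''), and the control law~\eqref{eq:u-stoch} is extracted by writing the bridge as the time-reversal of the uncontrolled process $\ud\tilde X_t=-A\tilde X_t\ud t+\epsilon B\ud\tilde W_t$ started at $y$ and reading off the drift correction $-\epsilon^2BB^\top Q_{1-t}^{-1}(X_t-m_{1-t})$. You instead \emph{verify}: you close the loop, observe that the resulting SDE is linear with additive noise (hence Gaussian), and match the mean ODE against the deterministic interpolation and the covariance Lyapunov equation against $\epsilon^2\Sigma_t$. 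Your route is more self-contained and, via the $\eta\downarrow 0$ treatment of the singular gain near $t=1$, actually more careful about the pinning than the paper, which asserts $X_1=\tilde X_0=y$ without addressing the blow-up of $\Phi_{1-t}^{-1}$; what it loses is the constructive explanation of where the gain comes from. The one step you defer --- the Lyapunov verification --- does go through: with $E:=e^{(1-t)A}$, the Gramian decomposition $\Phi_1=E\Phi_tE^\top+\Phi_{1-t}$ gives
\begin{equation*}
E^\top\Phi_{1-t}^{-1}E\,\Sigma_t
= E^\top\Phi_{1-t}^{-1}\bigl(I-E\Phi_tE^\top\Phi_1^{-1}\bigr)E\Phi_t
= E^\top\Phi_1^{-1}E\,\Phi_t,
\end{equation*}
which is exactly the cancellation needed to turn $\dot\Sigma_t=A\Sigma_t+\Sigma_tA^\top+BB^\top-BB^\top E^\top\Phi_1^{-1}E\Phi_t-\Phi_tE^\top\Phi_1^{-1}E\,BB^\top$ into $M_t\Sigma_t+\Sigma_tM_t^\top+BB^\top$. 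You should state that decomposition explicitly, since without it the cross terms do not visibly recombine.
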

\begin{remark}
	The feedback control law that achieves the deterministic and stochastic interpolation  is exactly the same: 
	\begin{equation*}
		k(t,\xi) = B^\top e^{(1-t)A^\top} \Phi_{1-t}^{-1} (y-e^{(1-t)A}\xi).
	\end{equation*}
    The expectation of the stochastic interpolation~\eqref{eq:stoch-interpolation} is exactly equal to the deterministic interpolation~\eqref{eq:det-interpolation}. Moreover, the deterministic interpolation can be derived from the stochastic interpolation in the limit as $\epsilon \to 0$. 
\end{remark}
\begin{proof}
	The uncontrolled process $X_t$, starting from $X_0=x$, is Gaussian with mean and covariance matrix given by:
	\begin{align*}
		\mathbb E[X_t]=e^{tA}x,\quad \text{Cov}(X_s,X_t) = \epsilon^2\Phi_s e^{(t-s)A^\top},\quad\text{for}\quad t\geq s\geq 0.
	\end{align*}
Therefore, conditioning on $X_1=y$, $X_t$ remains Gaussian with the mean  and covariance 
\begin{align*}
	\mathbb E[X_t|X_1=y] &= \mathbb E[X_t] + \text{Cov}(X_t,X_1)\text{Cov}(X_1,X_1)^{-1}(y-\mathbb E[X_1]), \\
	\text{Cov}(X_t,X_t|X_1=y) &= \text{Cov}(X_t,X_t)  - \text{Cov}(X_t,X_1) \text{Cov}(X_1,X_1)^{-1} \text{Cov}(X_t,X_1)^\top, 
\end{align*}
concluding the formula~\eqref{eq:stoch-interpolation}.

In order to obtain the formula for the control input, 
consider the uncontrolled process $\tilde X_t$ satisfying 
\begin{align*}
	\ud \tilde X_t = - A \tilde X_t  \ud t + \epsilon B \ud \tilde W_t,\quad \tilde X_0=y.
\end{align*}
The probability distribution of $\tilde X_t$ is Gaussian $\mathcal N(m_t,Q_t)$ with mean $m_t = e^{-At}y$ and covariance matrix \[Q_t = \epsilon^2\int_0^t e^{-sA} BB^\top  e^{-sA^\top} = \epsilon^2 e^{-tA} \Phi_t e^{-tA^\top}.\]
The time-reversal $X_t:=\tilde X_{1-t}$ satisfies the SDE
\begin{align*}
	\ud X_t = A X_t\ud t + \epsilon B \ud W_t -\epsilon^2 B B^\top Q_{1-t}^{-1}(X_t  - m_{1-t})\ud t.
\end{align*}
Note that, by construction, $X_1 = \tilde X_{0} = y$.  This is true starting from any initial point $X_0=x$. Therefore the control law that achieves the stochastic interpolant is 
\begin{align*}
	u_t = - \epsilon^2 B^\top Q_{1-t}^{-1}(X_t - m_{1-t}) =- B^\top e^{(1-t)A^\top}\Phi_{1-t}^{-1}e^{(1-t)A}(X_t - e^{-(1-t)A}y),
\end{align*}
concluding the control law~\eqref{eq:u-stoch}.
\end{proof}
\begin{remark} Our results hold for the case where we replace $\epsilon$ with a matrix. However, 
we restrict the exposition to stochastic models where the Brownian motion enters the system from the same channels as the control input.   
\end{remark}

\begin{figure}[t]
    \centering
    \includegraphics[width=0.32\textwidth,trim={10 0 45 40},clip]{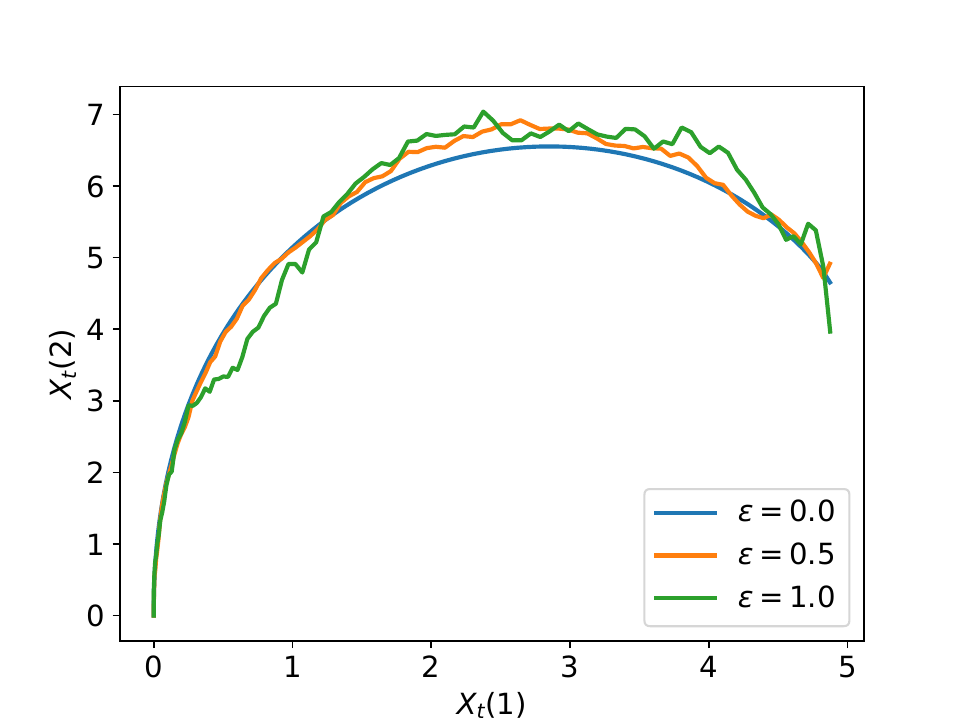}
         \hspace{1pt}
         \includegraphics[width=0.32\textwidth,trim={10 0 45 40},clip]{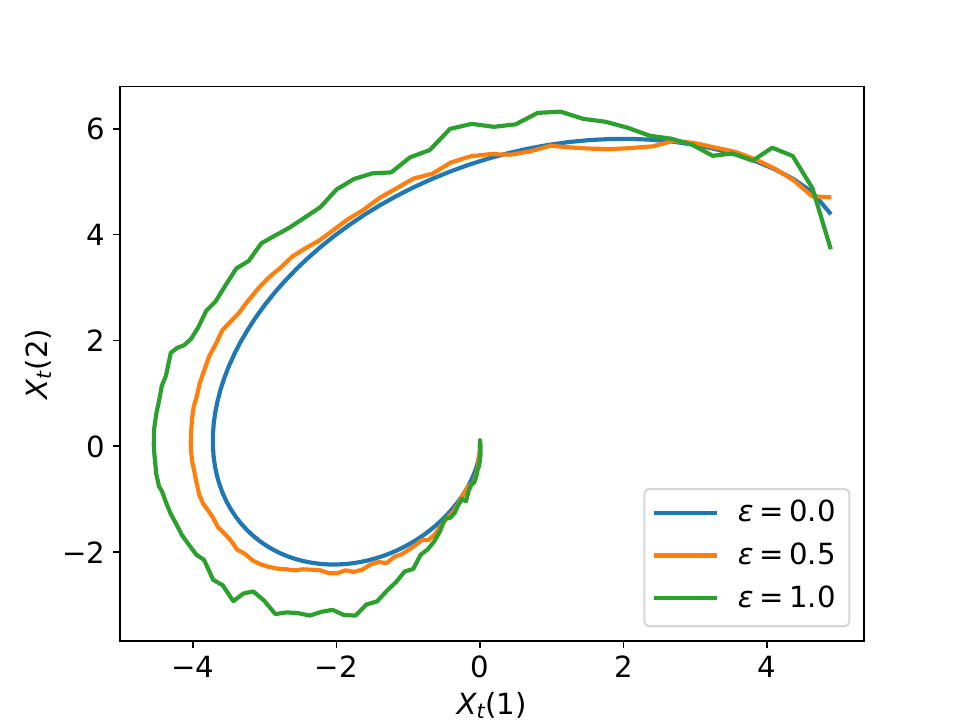}
         \hspace{1pt}
         \includegraphics[width=0.32\textwidth,trim={10 0 45 40},clip]{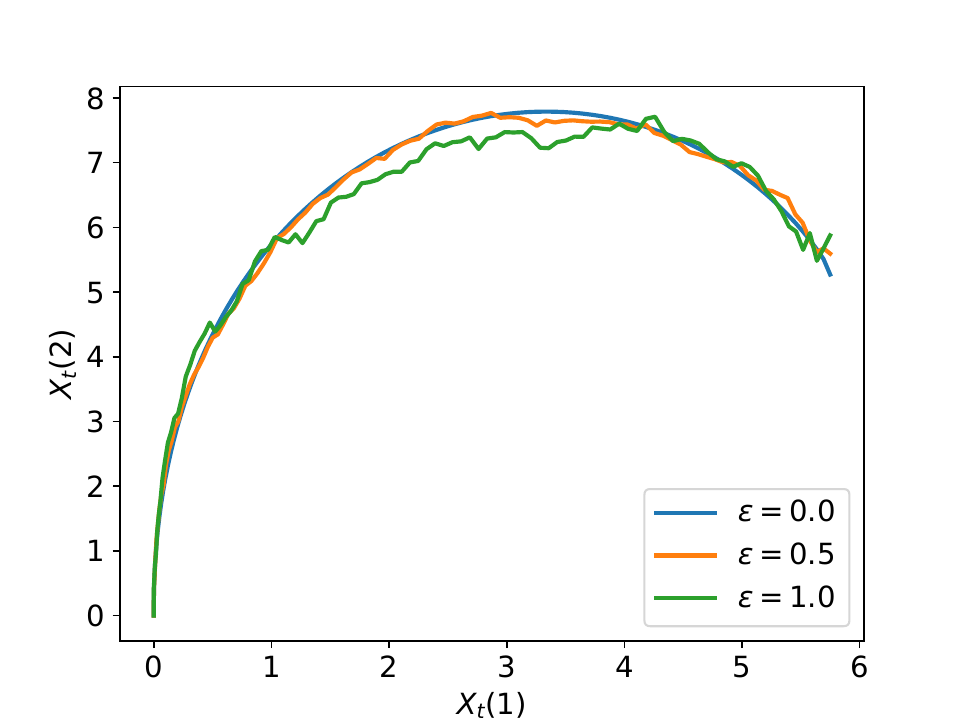} \\
         \hspace{10pt}(a) \hspace{130pt} (b)  \hspace{130pt} (c)
    \caption{Interpolations for three linear control systems with modeling parameters specified in~\eqref{eq:2d_systems}. 
    }
    \label{fig:three system traj}
\end{figure}

Figure~\ref{fig:three system traj} shows interpolations for the following second-order systems,

\begin{align}\label{eq:2d_systems}
    &(a) ~A = \begin{bmatrix}
        0 & 1\\ 0 & 0
    \end{bmatrix} ,\quad 
    (b) ~A = \begin{bmatrix}
        0 & 5\\ -5 & 0
    \end{bmatrix} , \quad
    (c) ~A = \begin{bmatrix}
        0 & 1\\ -1 & -1
    \end{bmatrix}, 
\end{align}
with the same $B = [0 \quad 1]^\top$. The first system represents a double integrator, the second one is a pure oscillator, and the third one is a Nyquist Johnson resistor model.  These systems are later used in our numerical results in Section~\ref{sec:numerics}. 

\section{Flow matching for control systems} \label{sec:flow-mathcing}
Our goal is to use the deterministic and stochastic interpolations constructed in Section~\ref{sec:interpolation} to find the feedback control law that steers a given initial distribution $P_0$ to a desired target distribution $P_1$ through a a deterministic or stochastic linear system. We present the construction for the stochastic case, since the construction for the deterministic case is identical. 

%
\begin{problem}\label{problem:cont-dist} Consider the stochastic linear system~\eqref{eq:lin-dyn-stoch}. For any pair of distributions $(P_0,P_1)$, find a $\mathcal F_t$-adapted control input $u_t$ so that if $X_0 \sim P_0$, then $X_1\sim P_1$. 
\end{problem}
We solve the problem using the flow matching methodology. The first step is to construct a process, denoted by $X^z_t$, that has the desired end-point distributions, i.e. $X^z_0\sim P_0$ and $X^z_1 \sim P_1$. The second step is to find the control input $u_t$ in~\eqref{eq:lin-dyn-stoch}  so that the probability law of $X_t$ is equal to the probability law of $X^z_t$.

In order to achive the first step, let $z=(x,y) \in \Re^n \times \Re^n$ with the  probability distribution $\Pi$. Assume the $x$-marginal and the $y$-marginal of $\Pi$ are equal to $P_0$ and $P_1$, respectively (e.g. take $\Pi=P_0\otimes P_1$). 
For each $z\sim \Pi$, let $X^z_t$ be the stochastic interpolant ~\eqref{eq:stoch-interpolation} with stochastic control input $u^z_t$ given by~\eqref{eq:u-stoch}. Note that, by construction,
\begin{align*}
    X^z_0 = x \sim P_0,\quad \text{and}\quad X^z_1 = y \sim P_1. 
\end{align*}
This achieves the goal of the first step. The second step is presented in the following theorem. 
 \begin{theorem}\label{thm:control}
 	Problem~\ref{problem:cont-dist} is solved with the feedback control law $u_t = \bar k(t,X_t)$ where 
 	\begin{align}\label{eq:u-flow-mathcing}
 		\bar k(t,\xi)&= \mathbb E[u^z_t|X^z_t=\xi]. 
 	\end{align}
    Moreover, $X_t \overset{\text{d}}{=}X^z_t$ for all $t\in[0,1]$, where $\overset{\text{d}}{=}$ means equality in distribution. 
 \end{theorem}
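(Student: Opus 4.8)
The plan is to show that the marginal law of the ensemble of bridges $\{X^z_t\}$ and the law of the closed-loop process driven by $\bar k$ satisfy one and the same Fokker--Planck equation with the same initial datum $P_0$, and then invoke uniqueness. First I would fix $z=(x,y)$ and write $u^z_t = \kappa^z(t,X^z_t)$ with the feedback $\kappa^z(t,\xi)= B^\top e^{(1-t)A^\top}\Phi_{1-t}^{-1}(y-e^{(1-t)A}\xi)$ from~\eqref{eq:u-stoch}, which for fixed $z$ is a deterministic function of $\xi$. For fixed $z$ the bridge solves $\ud X^z_t = (AX^z_t + B\kappa^z(t,X^z_t))\ud t + \epsilon B \ud W_t$, so its conditional density $p_t(\xi\mid z)$ obeys
\[
\partial_t p_t(\xi\mid z) = -\nabla_\xi\cdot\big[(A\xi + B\kappa^z(t,\xi))\,p_t(\xi\mid z)\big] + \frac{\epsilon^2}{2}\,\nabla_\xi\cdot\big[BB^\top\nabla_\xi p_t(\xi\mid z)\big].
\]

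Next I would integrate this identity against $\Pi(\ud z)$ to produce an equation for the marginal density $p_t(\xi) = \int p_t(\xi\mid z)\,\Pi(\ud z)$. The crucial structural point is that the diffusion coefficient $\epsilon B$ is identical for every bridge, so the second-order term marginalizes verbatim into $\frac{\epsilon^2}{2}\nabla_\xi\cdot[BB^\top\nabla_\xi p_t(\xi)]$. For the drift, the linear part $A\xi$ factors out of the integral, while for the control part I would use Bayes' rule in the form $p_t(\xi\mid z)\,\Pi(\ud z) = p_t(\xi)\,\Pi(\ud z\mid X^z_t=\xi)$ to recognize
\[
\int B\kappa^z(t,\xi)\,p_t(\xi\mid z)\,\Pi(\ud z) = p_t(\xi)\, B\,\mathbb{E}[u^z_t\mid X^z_t=\xi] = p_t(\xi)\,B\,\bar k(t,\xi),
\]
where the first equality uses that $u^z_t = \kappa^z(t,\xi)$ on the event $\{X^z_t=\xi\}$. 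This collapses the marginal equation into exactly the Fokker--Planck equation of the closed-loop system $\ud X_t = (AX_t + B\bar k(t,X_t))\ud t + \epsilon B\ud W_t$.

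Finally, since both $p_t$ (the ensemble marginal) and the law of the closed-loop $X_t$ solve the same Fokker--Planck equation and share the initial law $X_0\sim P_0$, uniqueness of the flow forces $X_t \overset{\text{d}}{=} X^z_t$ for all $t\in[0,1]$; evaluating at $t=1$ yields $X_1\sim P_1$, solving Problem~\ref{problem:cont-dist}. I expect the main obstacle to be analytic rather than structural: the diffusion $BB^\top$ is degenerate because the noise enters only through $B$, so the Fokker--Planck equation is at best hypoelliptic, and both the existence of the conditional densities and the uniqueness invoked in the last step require justification. Under Assumption~\ref{assumption:controllable} the Gramian $\Phi_t$ is nonsingular for $t>0$ and the relevant conditional laws are Gaussian with the explicit covariances appearing in~\eqref{eq:stoch-interpolation}, which should supply the needed regularity. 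Alternatively, one can bypass the PDE entirely by appealing to Gyöngy's mimicking theorem, which directly produces a Markovian SDE whose drift is the conditional expectation of the ensemble drift and whose one-time marginals coincide with those of $X^z_t$.
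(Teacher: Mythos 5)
Your argument is correct and is essentially the paper's proof in dual form: the paper applies It\^o's rule to a smooth bounded test function and uses the tower property of conditional expectation to show that $\mathbb{E}[f(X_t)]$ and $\mathbb{E}[f(X^z_t)]$ obey the same evolution equation, which is precisely the weak formulation of your Fokker--Planck/Bayes marginalization. Working with test functions rather than densities lets the paper sidestep the regularity issues you rightly flag (existence of the conditional densities and well-posedness of the hypoelliptic Fokker--Planck equation), although both versions ultimately rest on the same uniqueness assertion for the resulting evolution of the law.
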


\begin{proof}
		Consider a smooth and bounded test function $f:\mathbb R^n \to \mathbb R$. Let $X_t$ be the solution to~\eqref{eq:lin-dyn-stoch} with the feedback control law~\eqref{eq:u-flow-mathcing}. Then, the It\^o rule implies 
	\begin{align*}
		\frac{\ud}{\ud t} \mathbb E[f(X_t)] = \mathbb E[\nabla f(X_t)(A X_t + B \bar k(t,X_t)) + \epsilon^2\trace(BB^\top\nabla^2 f(X_t)]
	\end{align*}
	On the other hand, taking the time-derivative of  $\mathbb E[f(X^z_t)]$, while using the fact that, for a fixed $z$, $X^z_t$ solves ~\eqref{eq:lin-dyn-stoch} with control input $u^z_t$, implies 
	\begin{align*}
		\frac{\ud}{\ud t} \mathbb E[f(X^z_t)] &= \mathbb E[\nabla f( X^z_t)(AX^z_t + B u^z_t)+\sigma^2\trace(BB^\top\nabla^2 f(X^z_t)] \\&=\mathbb E[\nabla f( X^z_t)(A X^z_t + B\mathbb E [u^z_t|X^z_t])+\sigma^2\trace(BB^\top\nabla^2 f(X^z_t)]\\&=\mathbb E[\nabla f( X^z_t)(A X^z_t + B\bar k(t,X^z_t))+\sigma^2\trace(BB^\top\nabla^2 f(X^z_t)],
	\end{align*}
	where we used the tower property of conditional expectation in the second identity, and the definition of the control law~\eqref{eq:u-flow-mathcing} in the third identity. The two derivations imply that the probability law of $X_t$ and $X^z_t$ follow  the same update law. 
	Therefore, due the the equality of the initial  distribution of $X^z_0\sim P_0$ and $X_0\sim P_0$, the probability law of $X_t$ is equal to the probability law of $X^z_t$ for all $t\in[0,1]$. In particular, probability distribution of $X_1$ is equal to $P_1$, the probability distribution of $X^z_1$. This concludes the solution to Problem~\ref{problem:cont-dist}.

\end{proof}

\subsection{Control law for Gaussian and mixture of Gaussians target distribution}
In this section, we present the analytical formula for the feedback control law for the special case where $P_0$ is a Gaussian and $P_1$ is a mixture of Gaussians. In particular, we assume 
\begin{align}\label{eq:mixture-model}
    P_0 = \eta_0,\quad P_1=\sum_{l=1}^L w_l \eta_l,
\end{align}
where $\eta_l$ is $\mathcal N(m_l,Q_l)$, for $l=0,\ldots,L$, $w_l\geq 0$, and $\sum_{l=1}^L w_l=1$. The case $L=1$ corresponds to a Gaussian target, while $L>1$ corresponds to a mixture. 

 In order to achieve this, it is useful to express the feedback control law~\eqref{eq:u-flow-mathcing} according to
\begin{align}\label{eq:kbar-yhat}
 		\bar k(t,\xi)
 		&=B^\top e^{(1-t)A^\top} \Phi_{1-t}^{-1} (\mathbb E(y|X^z_t=\xi]-e^{(1-t)A}\xi)
 \end{align}
where the formula~\eqref{eq:u-stoch} for $u^z_t$  is used. Therefore, the problem of finding the feedback control law is reduced to finding the analytical formula for the conditional expectation $\mathbb E[y|X^z_t=\xi]$. To simplify the presentation, we express 
    the relationship~\eqref{eq:stoch-interpolation} according to
    \begin{align}\label{eq:Xt-y-x-R-S}
       X^z_t = R_t x + S_ty + \epsilon \Sigma_t^{\frac{1}{2}}Z.
     \end{align}
     where 
          \begin{align*}
         R_t:= e^{tA}-\Phi_t e^{(1-t)A^\top}\Phi_1^{-1}e^A ,\quad \text{and}\quad S_t := \Phi_t e^{(1-t)A^\top}\Phi_1^{-1}.
     \end{align*}
\begin{corollary}\label{cor:mixture}
     Consider Problem~\ref{problem:cont-dist} where $P_0$ is a Gaussian and $P_1$ is a mixture of Gaussians, as described in~\eqref{eq:mixture-model}. Then, the feedback control law that solves the problem takes the form~\eqref{eq:kbar-yhat}
     where 
                 \begin{align}\label{eq:y-hat-mixture}
        \mathbb E[y|X^z_t=\xi]
        &= \frac{1}{\sum_{l=1}^L w'_l}\sum_{l=1}^L w'_l(m_l + K_l(\xi-R_tm_0-S_tm_l)), 
     \end{align}
     and 
     \begin{align*}
         K_l&:=Q_lS_t^\top(R_tQ_0R_t^\top + S_t Q_lS_t^\top + \epsilon^2 \Sigma_t)^{-1},\\
         w'_l&:= w_l\exp(-\frac{1}{2}(\xi-R_tm_0-S_tm_l)^\top(R_t^\top Q_0 R_t+S_tQ_lS_t^\top+\epsilon^2\Sigma_t)^{-1}(\xi-R_tm_0-S_tm_l)),
     \end{align*}
     for $l=1,\ldots,L$. 
\end{corollary}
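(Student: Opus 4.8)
The plan is to compute the conditional expectation $\mathbb{E}[y\mid X^z_t=\xi]$ appearing in~\eqref{eq:kbar-yhat} directly, exploiting the fact that, under the product coupling $\Pi=P_0\otimes P_1$, the triple $(x,y,Z)$ in the representation~\eqref{eq:Xt-y-x-R-S} is mutually independent. The only obstruction to joint Gaussianity is that $y$ is drawn from a mixture rather than a single Gaussian, so I would first disintegrate the mixture by introducing a latent label $\ell\in\{1,\dots,L\}$ with $\mathbb{P}(\ell=l)=w_l$ and $y\mid\{\ell=l\}\sim\mathcal{N}(m_l,Q_l)$. The tower property then gives
\begin{align*}
\mathbb{E}[y\mid X^z_t=\xi]=\sum_{l=1}^L \mathbb{P}(\ell=l\mid X^z_t=\xi)\,\mathbb{E}[y\mid X^z_t=\xi,\ell=l],
\end{align*}
so the task splits into (i) the conditional mean within each component and (ii) the posterior mixing weights.

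For step (i), I would fix $\ell=l$. Conditioned on the label, $(x,y,Z)$ are jointly Gaussian, hence so is the pair $(X^z_t,y)$. Reading off the moments from~\eqref{eq:Xt-y-x-R-S} and using independence,
\begin{align*}
\mathbb{E}[X^z_t\mid \ell=l]&=R_t m_0+S_t m_l,\quad \mathbb{E}[y\mid\ell=l]=m_l,\\
\mathrm{Cov}(X^z_t\mid\ell=l)&=R_tQ_0R_t^\top+S_tQ_lS_t^\top+\epsilon^2\Sigma_t,\quad \mathrm{Cov}(y,X^z_t\mid\ell=l)=Q_lS_t^\top,
\end{align*}
where the cross-covariance uses that $y$ is independent of $x$ and $Z$. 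The standard Gaussian conditioning formula (already invoked in the proof of~\eqref{eq:stoch-interpolation}) then yields $\mathbb{E}[y\mid X^z_t=\xi,\ell=l]=m_l+K_l(\xi-R_tm_0-S_tm_l)$ with $K_l$ as stated, which is routine linear algebra once the moments are in hand.

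For step (ii), I would apply Bayes' rule: the posterior weight $\mathbb{P}(\ell=l\mid X^z_t=\xi)$ is proportional to $w_l$ times the density at $\xi$ of $X^z_t\mid\{\ell=l\}\sim\mathcal{N}(R_tm_0+S_tm_l,\,\Lambda_l)$, where $\Lambda_l:=R_tQ_0R_t^\top+S_tQ_lS_t^\top+\epsilon^2\Sigma_t$. Normalizing across $l$ reproduces the weights $w'_l/\sum_{l'}w'_{l'}$, the quadratic form in $w'_l$ arising from the exponent of this Gaussian density. Combining (i) and (ii) then gives~\eqref{eq:y-hat-mixture}.

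The main point requiring care is the invertibility of $\Lambda_l$ and the bookkeeping of the Gaussian normalization in the posterior. Invertibility follows from Assumption~\ref{assumption:controllable} (which makes $\Phi_t$, and hence the noise contribution, well-behaved) together with positivity of the component covariances; the $(2\pi)^{-n/2}$ prefactors cancel in the Bayes ratio. The one subtlety I would double-check is the determinant factor $\det(\Lambda_l)^{-1/2}$: since $\Lambda_l$ depends on $l$ through $Q_l$, it does \emph{not} cancel in general, so the stated $w'_l$ is exact precisely when these determinants coincide (e.g. equal component covariances), and in the fully general case I would reinstate the $\det(\Lambda_l)^{-1/2}$ factor in each $w'_l$.
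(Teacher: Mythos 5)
Your proposal follows essentially the same route as the paper: Gaussian conditioning within each mixture component (the paper carries this out explicitly only for $L=1$) and a Bayes-rule reweighting across components, the details of which the paper omits ``on account of space.'' Your argument is correct, and your caveat about the determinant is a genuine catch: the posterior weight for component $l$ is proportional to $w_l\det(\Lambda_l)^{-1/2}\exp(-\tfrac12(\xi-R_tm_0-S_tm_l)^\top\Lambda_l^{-1}(\xi-R_tm_0-S_tm_l))$ with $\Lambda_l=R_tQ_0R_t^\top+S_tQ_lS_t^\top+\epsilon^2\Sigma_t$, and since $\Lambda_l$ depends on $l$ through $Q_l$, the $\det(\Lambda_l)^{-1/2}$ factor does not cancel in the normalization; the paper's stated $w'_l$ omits it and is therefore exact only when the component covariances (hence the $\Lambda_l$) coincide. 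Reinstating that factor, as you propose, gives the correct general formula.
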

\begin{proof}
Consider the spacial case where $L=1$. In this case, 
 the Gaussian assumption  $x\sim \mathcal N(m_0,Q_0)$ and $y\sim \mathcal N(m_1,Q_1)$, the relationship~\eqref{eq:Xt-y-x-R-S}, and selecting the independent coupling $\Pi=P_0\otimes P_1$, imply 
     \begin{align*}
        \mathbb E[y|X^z_t=\xi] &=  \mathbb E[y] + \text{Cov}(y,X^z_t)\text{Cov}(X^z_t,X^z_t)^{-1}(\xi-\mathbb E[X^z_t])\\
        &=m_1 + Q_1S_t^\top(R_tQ_0R_t^\top + S_t Q_1S_t^\top + \epsilon^2 \Sigma_t)^{-1}(\xi-R_tm_0-S_tm_1),
     \end{align*}
     concluding the formula~\eqref{eq:y-hat-mixture} for $L=1$. 
     
The extension to the mixture case $L>1$ follows by computing the conditional expectation for each Gaussian member of the mixture, and forming their weighted linear combination, where the weights are appropriately adjusted according to the likelihood of that particular member. The derivation details are removed on account of space. Similar derivations are common in Gaussian sum filters~\citep{alspach1972nonlinear}.   A similar form of the feedback control law also appears in~\cite{salhab2019collective} in the context of LQG games with multiple choice. 
     \end{proof}

\subsection{Extension to control affine systems}
Consider the control affine stochastic system 
\begin{align*}
    \ud X_t = a(X_t)\ud t + b(X_t)(u_t + \epsilon \ud W_t),
\end{align*}
where $a:\Re^n\to\Re^n$ and $b:\Re^n\to \Re^{n\times m}$.  Let $X^z_t$ be the stochastic bridge for this system, with the corresponding control input $u^z_t$. Then, the control law~\eqref{eq:u-flow-mathcing} can also be used for this system, to steer an initial distribution $P_0$ to a target distribution $P_1$. However, in such a general setting, the process of sampling from $X^z_t$ becomes computationally challenging because  an analytical expression for the stochastic bridge is not available. 
\section{Algorithm and numerical results}\label{sec:numerics}
In general, the conditional expectation~\eqref{eq:u-flow-mathcing} does not have an explicit solution. We follow the same procedure as in the flow matching methodology to numerically approximate the conditional expectation as the solution to the least-squares  regression problem: 
 \begin{align}\label{eq:numerical_approx}
 	\bar k &= \argmin_f \mathbb E[\|f(t,X^z_t) - u_t^z\|^2 ]\approx \argmin_{f\in \mathcal F} \frac{1}{N}\sum_{i=1}^N \|f(t,X^{z^i}_t)-u_t^{z^i}\|^2.
 	\end{align}
     where the expectation is approximated using $N$ independent samples of the pair $z^i=(x^i,y^i)\sim \Pi$. In this expression, $X^{z^i}_t$  represents a sample from~\eqref{eq:stoch-interpolation} with the corresponding control input $u_t^{z^i}$ given in~\eqref{eq:u-stoch}.  
And $\mathcal F$ represents a parameterized function class. 
Across all numerical results, we select $\mathcal F$ to be the class of neural networks with a 3 block ResNet architecture where each block consists of 2 linear layers of width $32$ and an exponential linear unit (ELU)-type activation function. 

The algorithm has two stages. In the training stage, we use the ADAM optimizer, with initial learning rate $10^{-2}$ and exponential decay of order $0.999$, to solve the optimization problem~\eqref{eq:numerical_approx} and find the parameters of the neural net $f$. The number of iterations is $10^4$, with the total sample size $N=2000$, and the batch size $64$. In the prediction stage, we use the Euler-Maruyama method, with $\Delta t = 0.001$,  to simulate $N'=2000$ independent realizations of the SDE~\eqref{eq:lin-dyn-stoch}  using the feedback control law learned in the training stage.  The samples are denoted by $\{X^i_t\}_{i=1}^{N'}$. 
The code for reproducing the results is available online\footnote{\url{https://github.com/YuhangMeiUW/Flow-matching-control}}.

     

 \begin{figure}[t]
    \centering
    \includegraphics[width=0.32\textwidth,trim={5 5 55 35},clip]{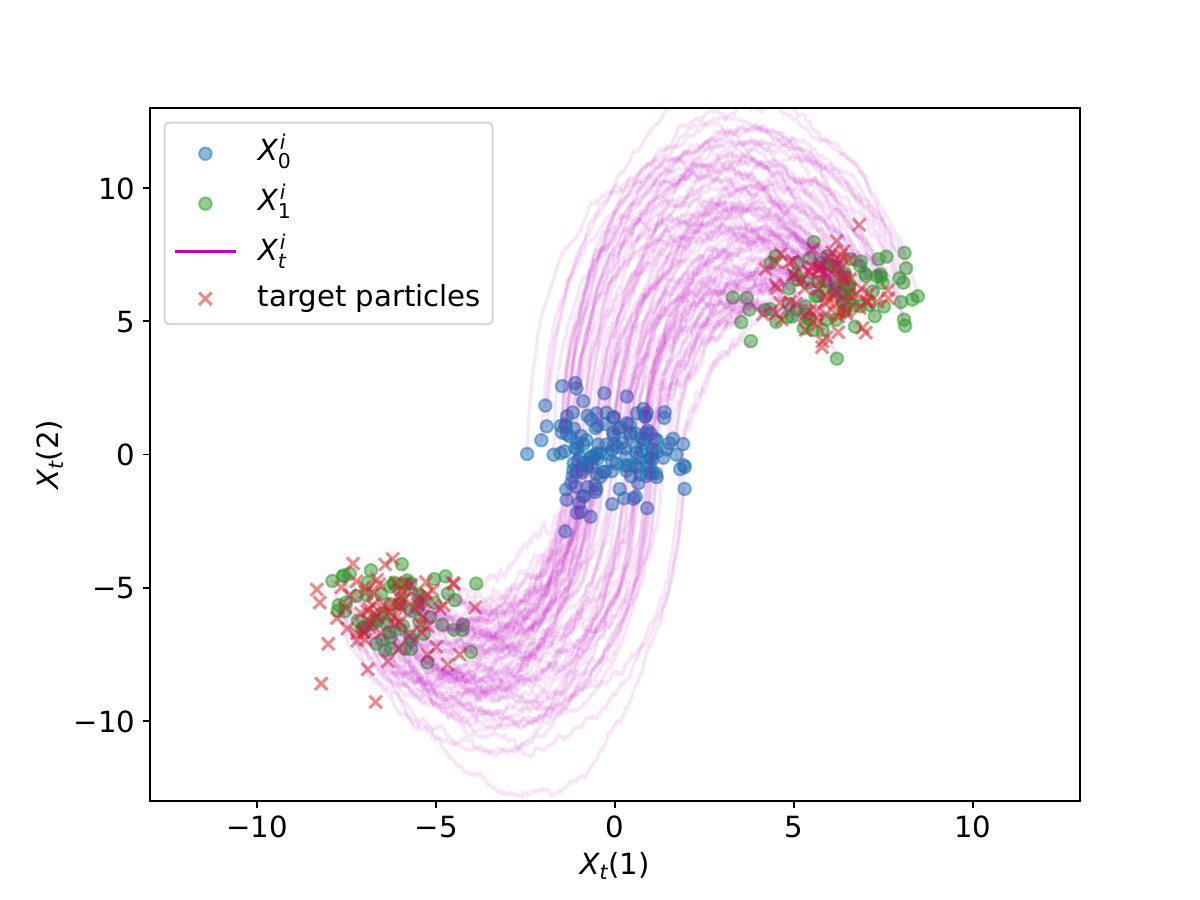}
         \hspace{1pt}
         \includegraphics[width=0.32\textwidth,trim={5 5 55 35},clip]{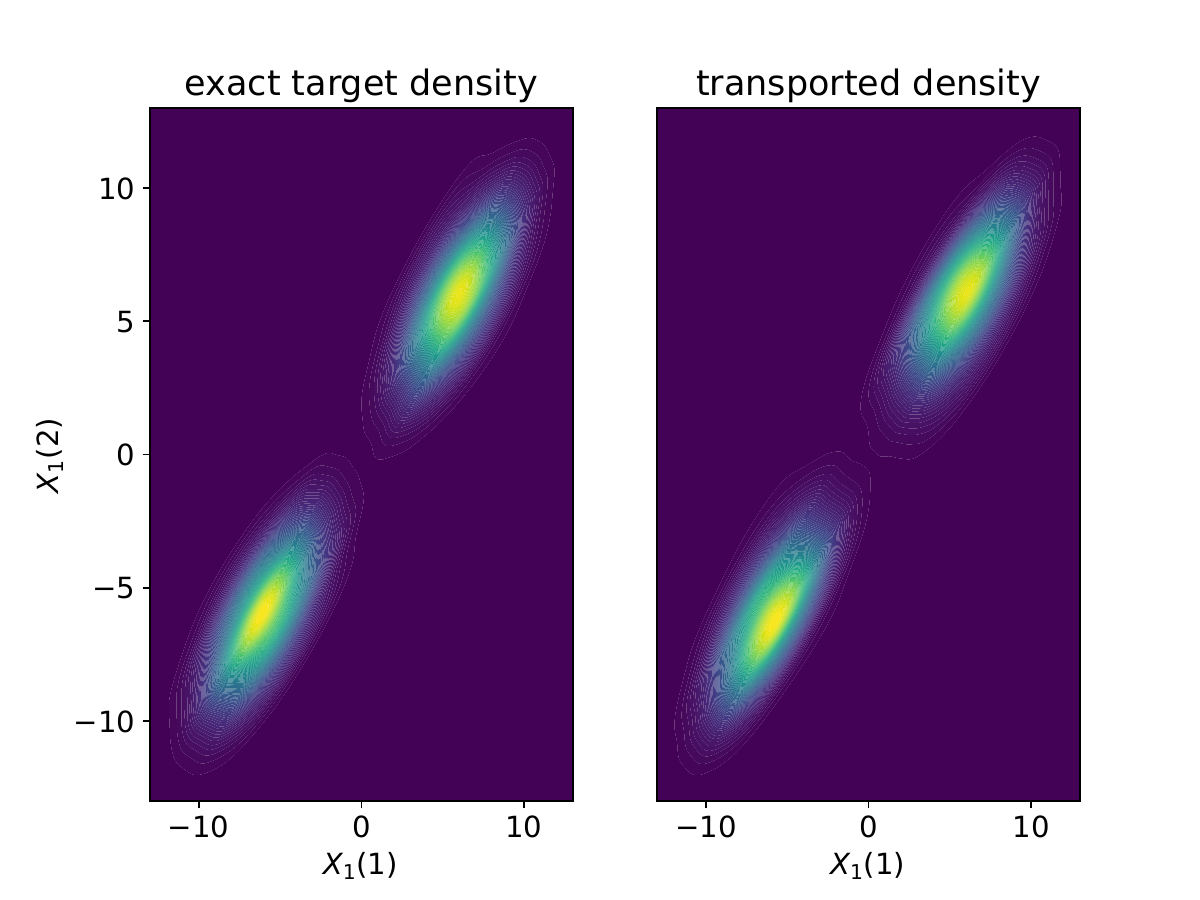}
         \hspace{1pt}
         \includegraphics[width=0.32\textwidth,trim={5 5 55 35},clip]{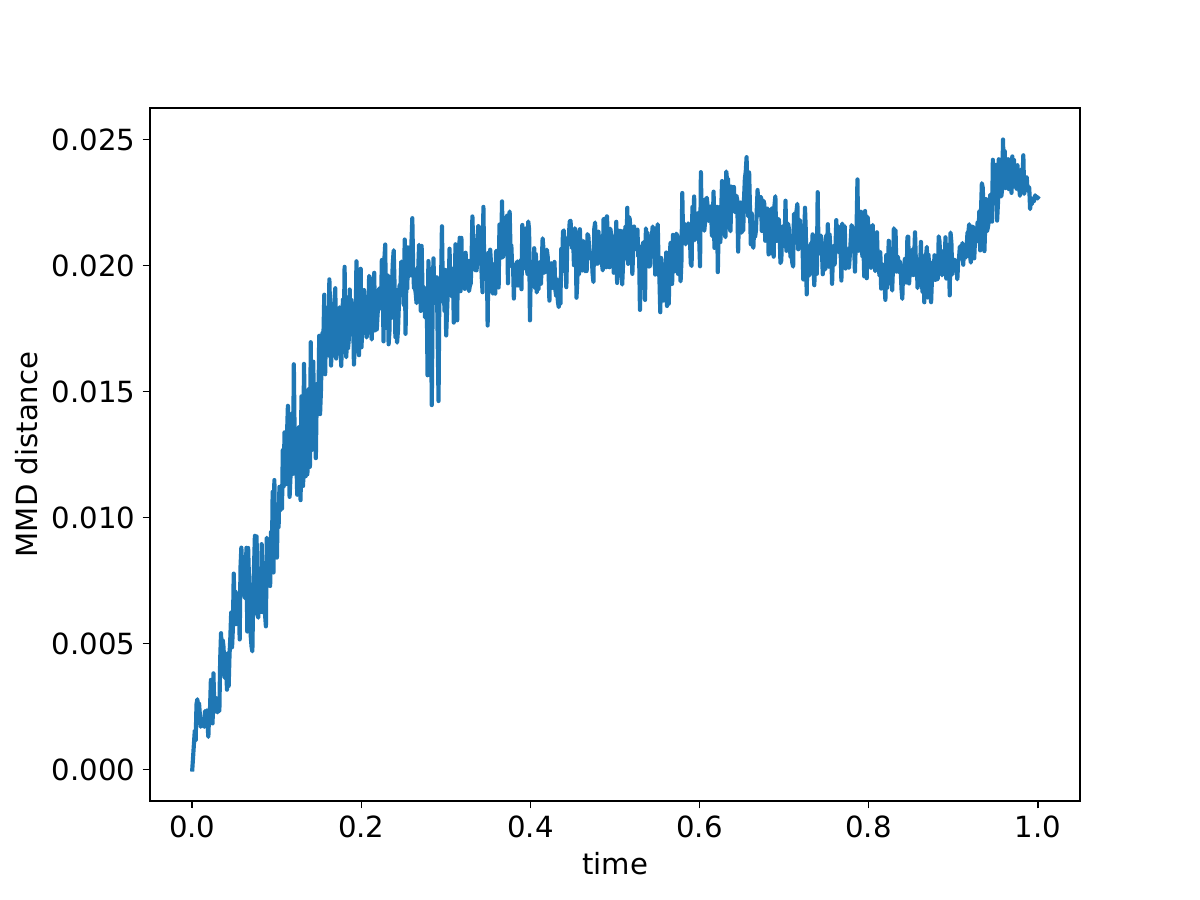}
         
         \includegraphics[width=0.32\textwidth,trim={5 5 55 35},clip]{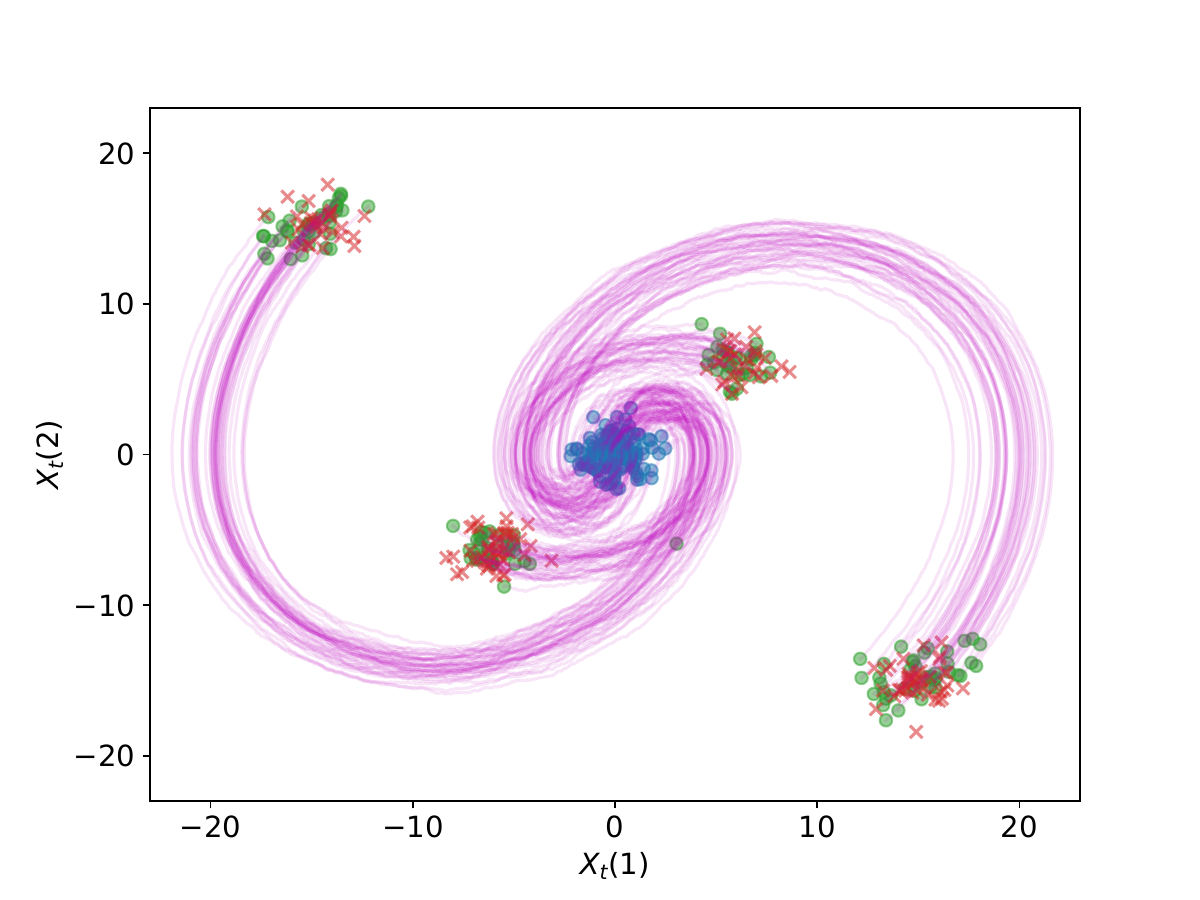}
         \hspace{1pt}
         \includegraphics[width=0.32\textwidth,trim={5 5 55 35},clip]{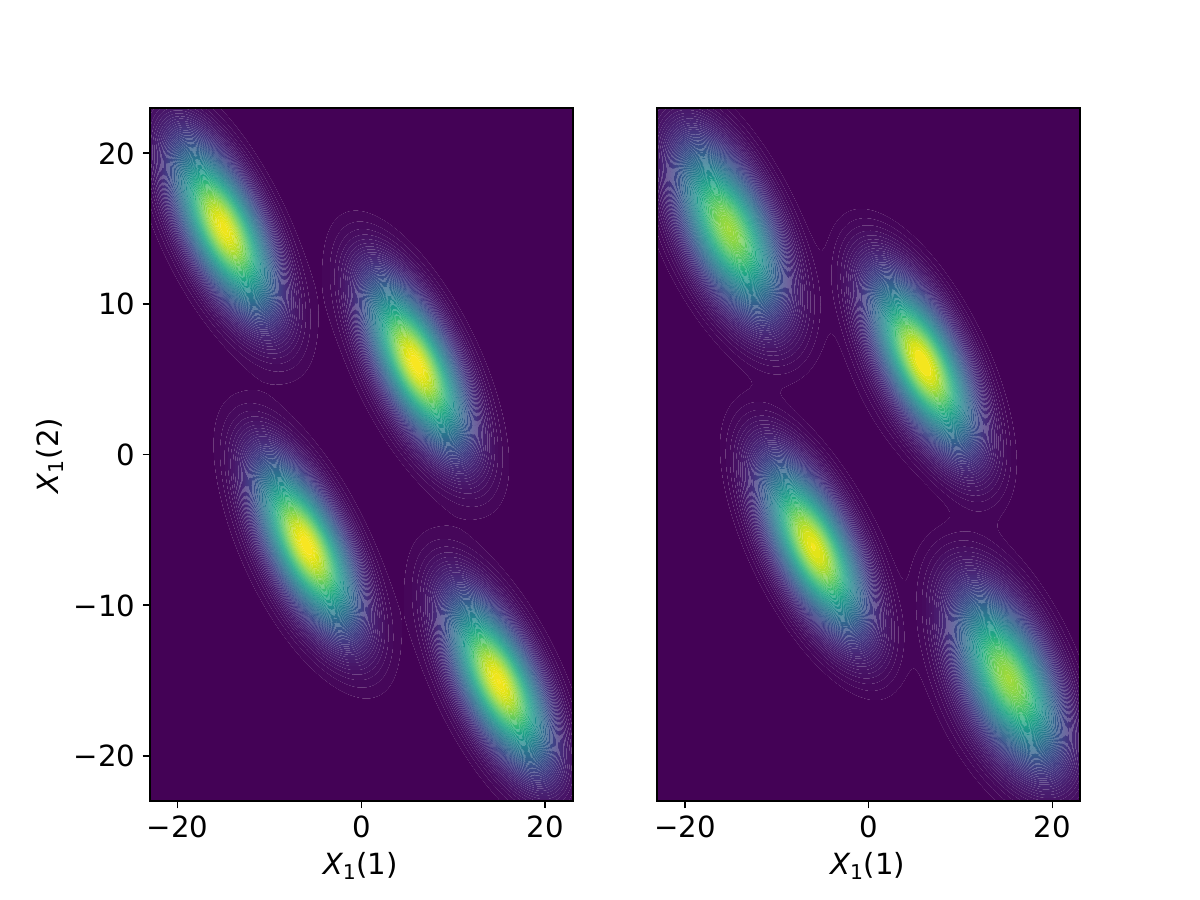}
         \hspace{1pt}
         \includegraphics[width=0.32\textwidth,trim={5 5 55 35},clip]{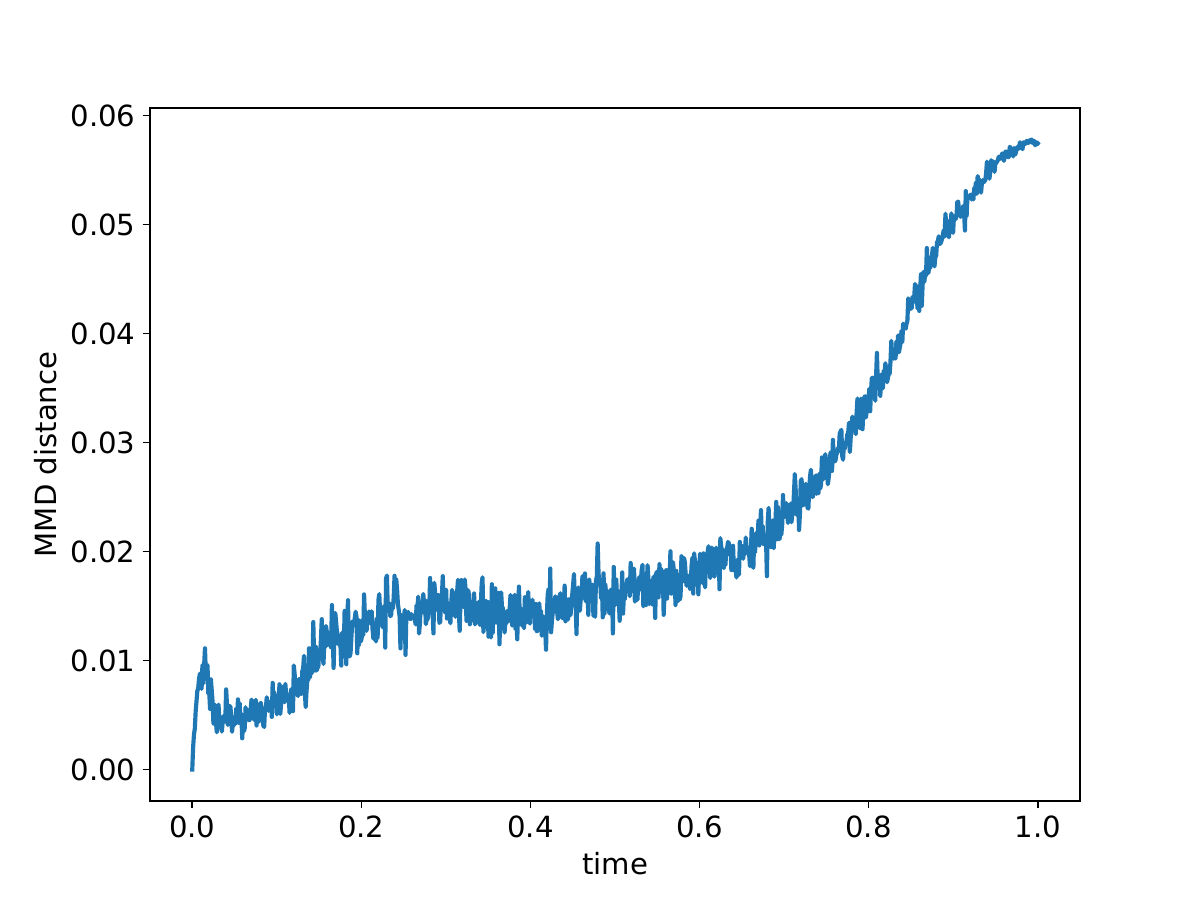}

          \includegraphics[width=0.32\textwidth,trim={5 5 55 35},clip]{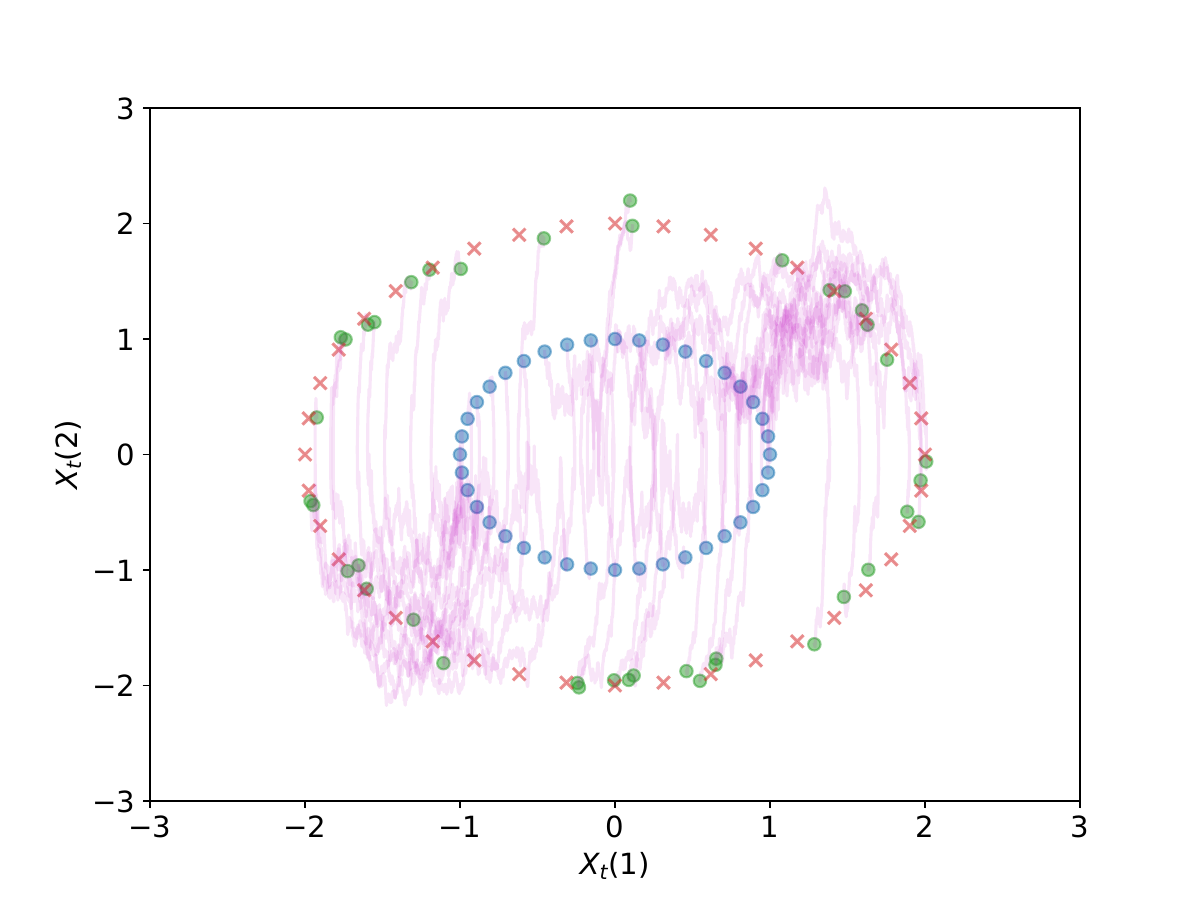}
         \hspace{1pt}
         \includegraphics[width=0.32\textwidth,trim={5 5 55 35},clip]{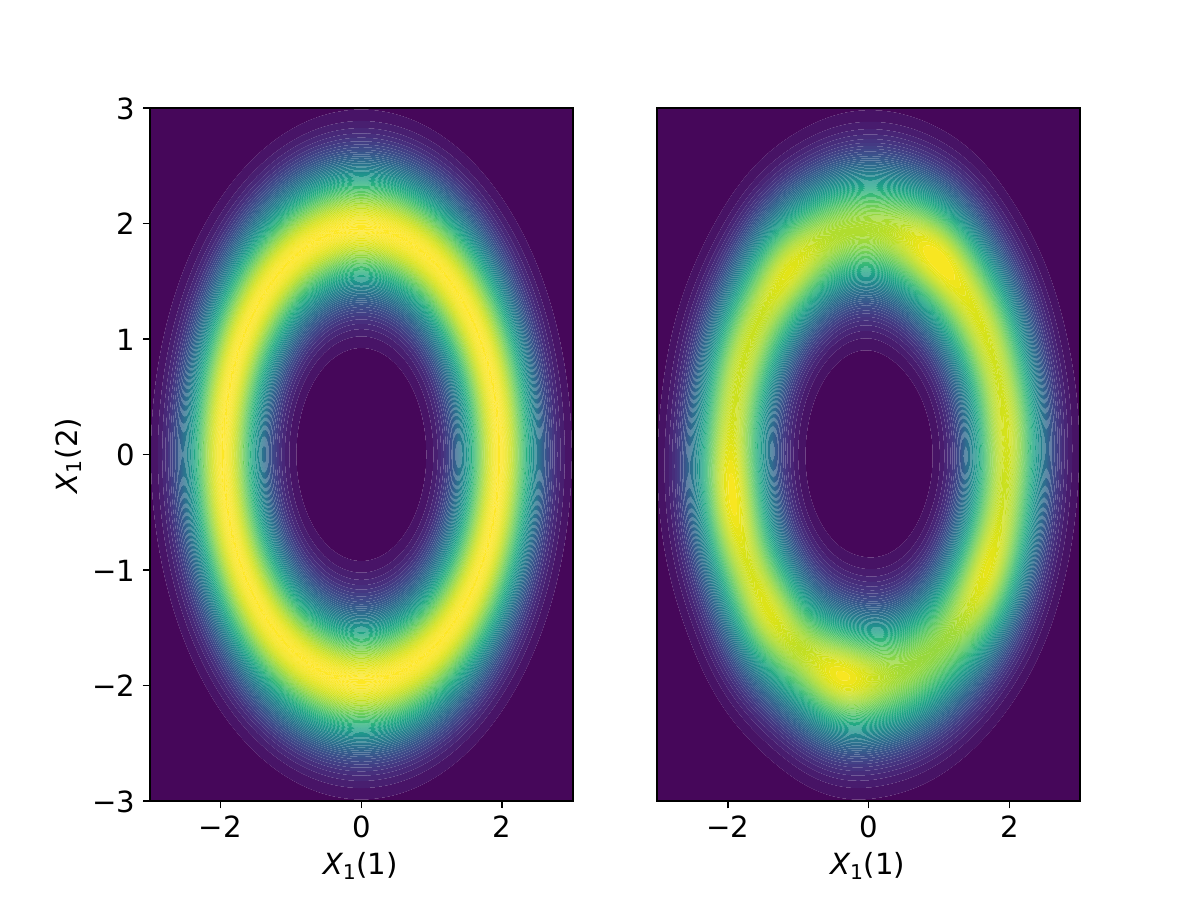}
         \hspace{1pt}
         \includegraphics[width=0.32\textwidth,trim={5 5 55 35},clip]{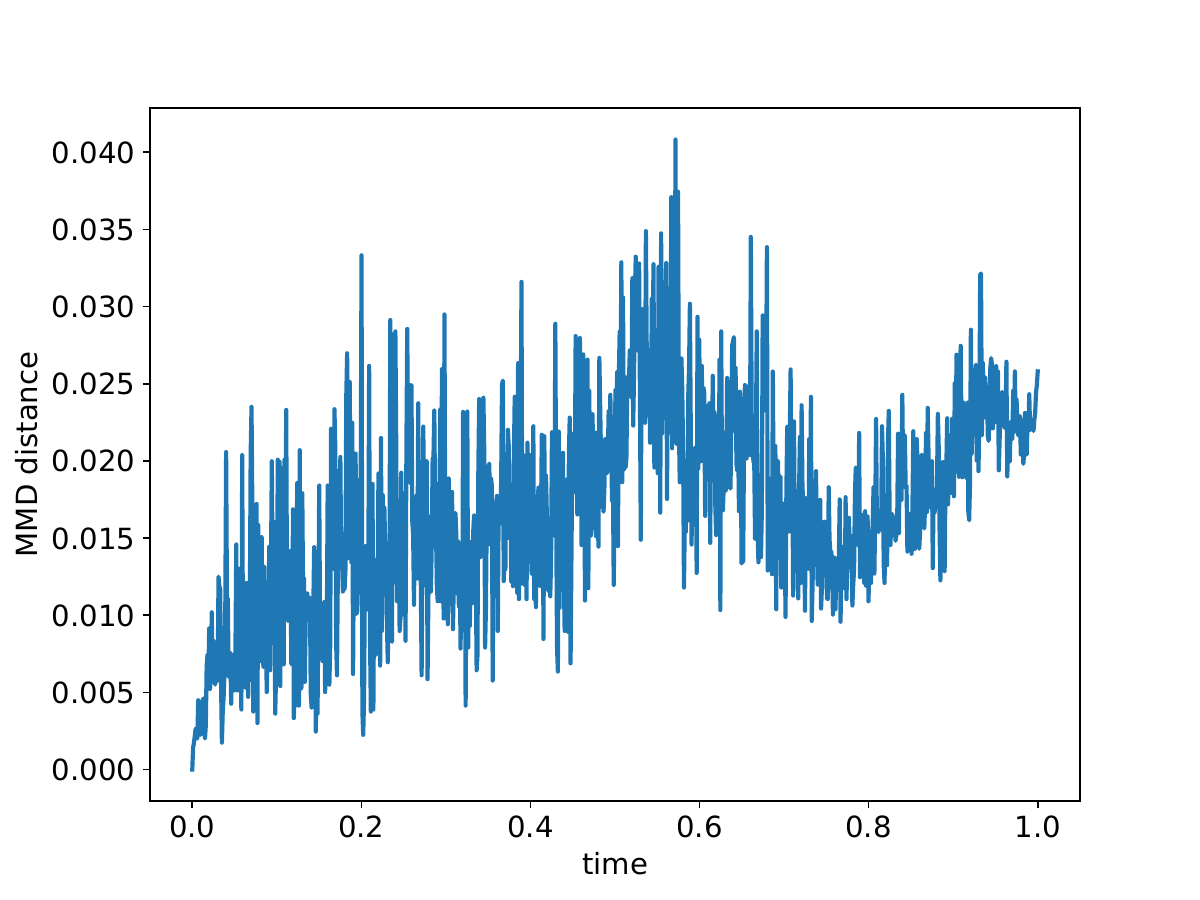}
    \caption{Numerical results for three 2-dimensional linear control systems, with model parameters~\eqref{eq:2d_systems}-(a)-(b)-(c). First row: numerical result for~\eqref{eq:2d_systems}-(a). The left panel shows the trajectories $\{X^i_t\}_{i=1}^{N'}$ generated from the prediction stage of the algorithm. The second panel compares the exact target density with a kernel density approximation of the generated samples  $\{X^i_1\}_{i=1}^{N'}$. The right panel shows the MMD distance between generated samples $\{X^i_t\}_{i=1}^{N'}$ and training samples $\{X^{z^i}_t\}_{i=1}^{N}$.  The second and third rows show similar results for models~\eqref{eq:2d_systems}-(b) and (c), respectively.}
    \label{fig:2d examples}
\end{figure}

\subsection{Numerical results for 2d systems}
The numerical results for three different 2-dimensional linear control systems, with model parameters~\eqref{eq:2d_systems}-(a)-(b)-(c), are presented in Figure~\ref{fig:2d examples}.  In all these cases, the coefficient $\epsilon=1$. 
The first row shows the results for~\eqref{eq:2d_systems}-(a) where the initial distribution is standard Gaussian and the target distribution is a mixture of two Gaussians. The left panel shows the trajectories $\{X^i_t;0\leq t\leq 1\}$ generated from the prediction stage. 
The initial states $\{X^i_0\}_{i=1}^{N'}$ are samples from the initial distribution and depicted by blue circles. 
The terminal states  $\{X^i_1\}_{i=1}^{N'}$ are depicted with green circles and are expected to represent samples from the target distribution. For comparison, independent samples from target distribution are shown as red ``$\times$'' markers. A kernel density approximation of the terminal states  is compared  with the exact target density in the second panel. The result demonstrates a qualitative proof that the proposed algorithm solves Problem~\ref{problem:cont-dist} for this example. The right panel shows a quantitative comparison between the empirical probability distributions of the generated trajectories $\{X^i_t\}_{i=1}^{N'}$ and training samples $\{X^{z^i}_t\}_{i=1}^N$, using the maximum mean discrepancy (MMD) distance with Gaussian kernel and bandwidth $2$. The MMD distance is normalized by the MMD distance between the initial and target distribution. The result highlights the conclusion of Theorem~\ref{thm:control} that $X_t \overset{\text{d}}{=}X^z_t$ for all $t\in[0,1]$.  

The second and third rows show the same results for modeling parameters~\eqref{eq:2d_systems}-(b) and (c), respectively.  In the second row, the initial distribution is standard Gaussian, while the terminal distribution is a mixture of four Gaussians. In the third row, the initial and target distributions are uniform distributions on circles of different radius. The results serve as a proof of concept that the proposed algorithm solves Problem~\ref{problem:cont-dist} with a reasonable accuracy among different class of linear systems and probability distributions.

\subsection{Numerical results for higher dimensional systems}
We explore the scalability of the algorithm with the problem dimension by considering a mass-spring model with model parameters as in~\cite[Sec. IV-C]{mei2024time}. 
Figure~\ref{fig:high dimensional examples} shows the results for $4$-dimensional and $8$-dimensional mass-spring systems. The target distribution is a mixture of four Gaussians. The left panel shows the approximated and exact densities projected onto the last two components. The panel at the center shows the normalized MMD distance between the empirical distribution of the generated samples and training samples. The right panel shows the Wasserstein-2\footnote{The $W_2$ is computed using the python optimal transport (POT) library with the squared Euclidean distance.} $(W_2)$ distance between the same samples. The results highlight the ability of the algorithm to scale with the problem dimension.

\begin{figure}[t]
    \centering
          \includegraphics[width=0.32\textwidth,trim={5 5 55 35},clip]{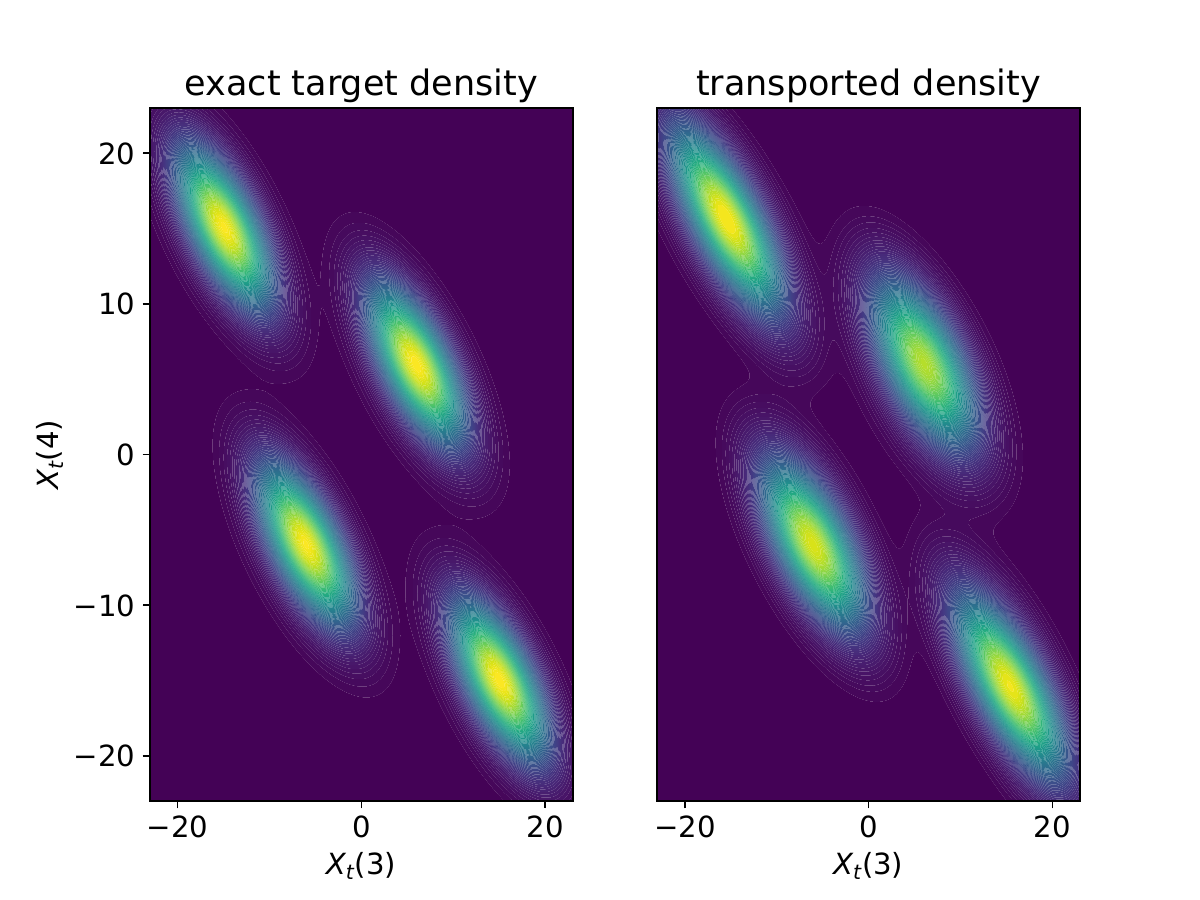}
          \hspace{1pt}
          \includegraphics[width=0.32\textwidth,trim={5 5 55 35},clip]{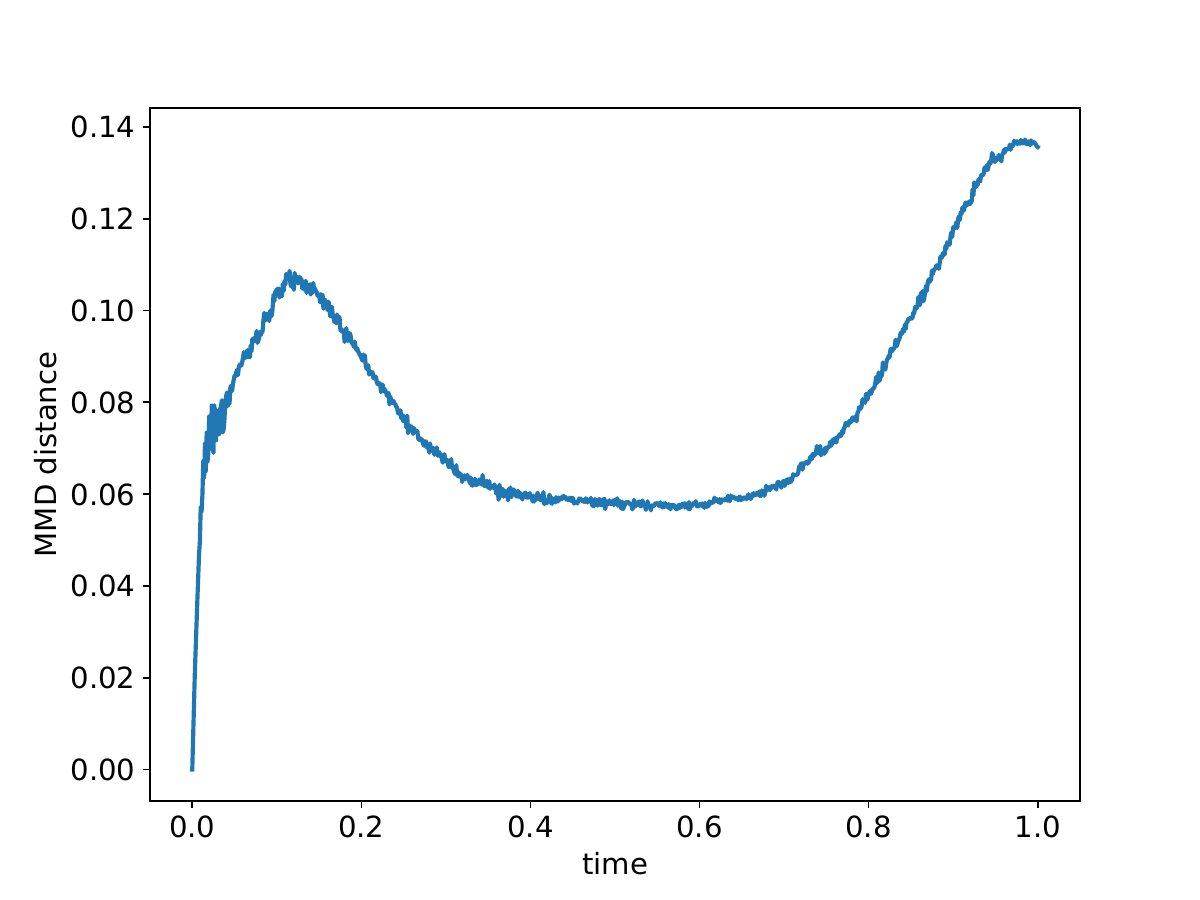}
          \hspace{1pt}
          \includegraphics[width=0.32\textwidth,trim={5 5 55 35},clip]{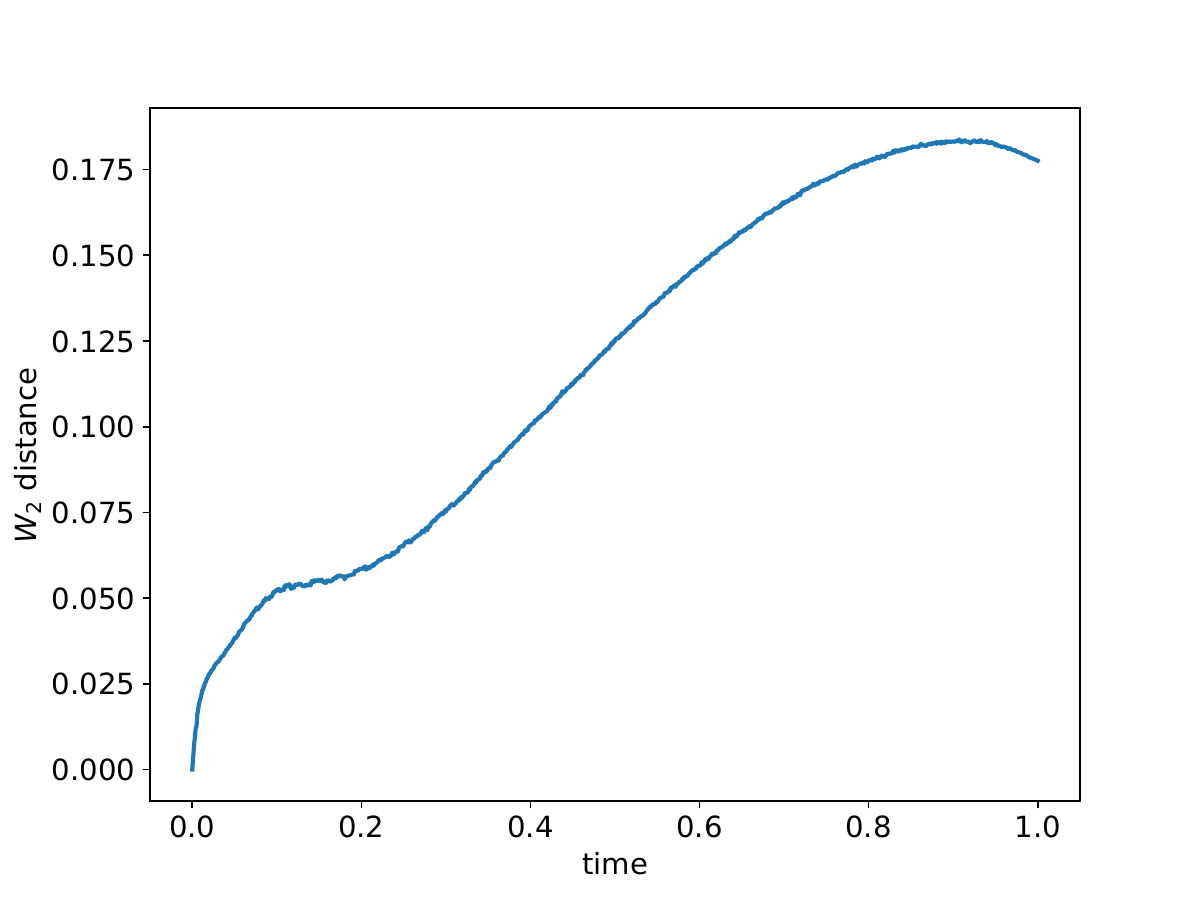}

          \includegraphics[width=0.32\textwidth,trim={5 5 55 35},clip]{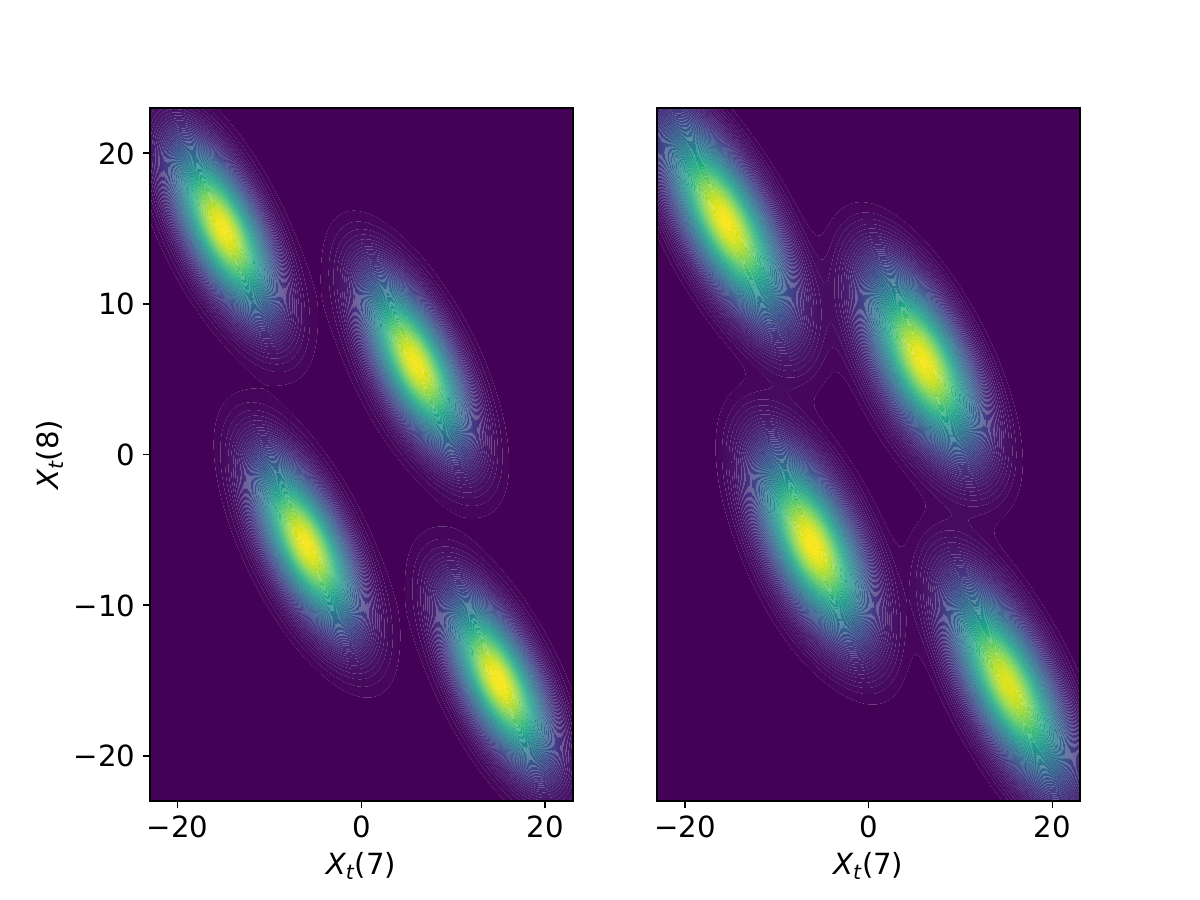}
          \hspace{1pt}
          \includegraphics[width=0.32\textwidth,trim={5 5 55 35},clip]{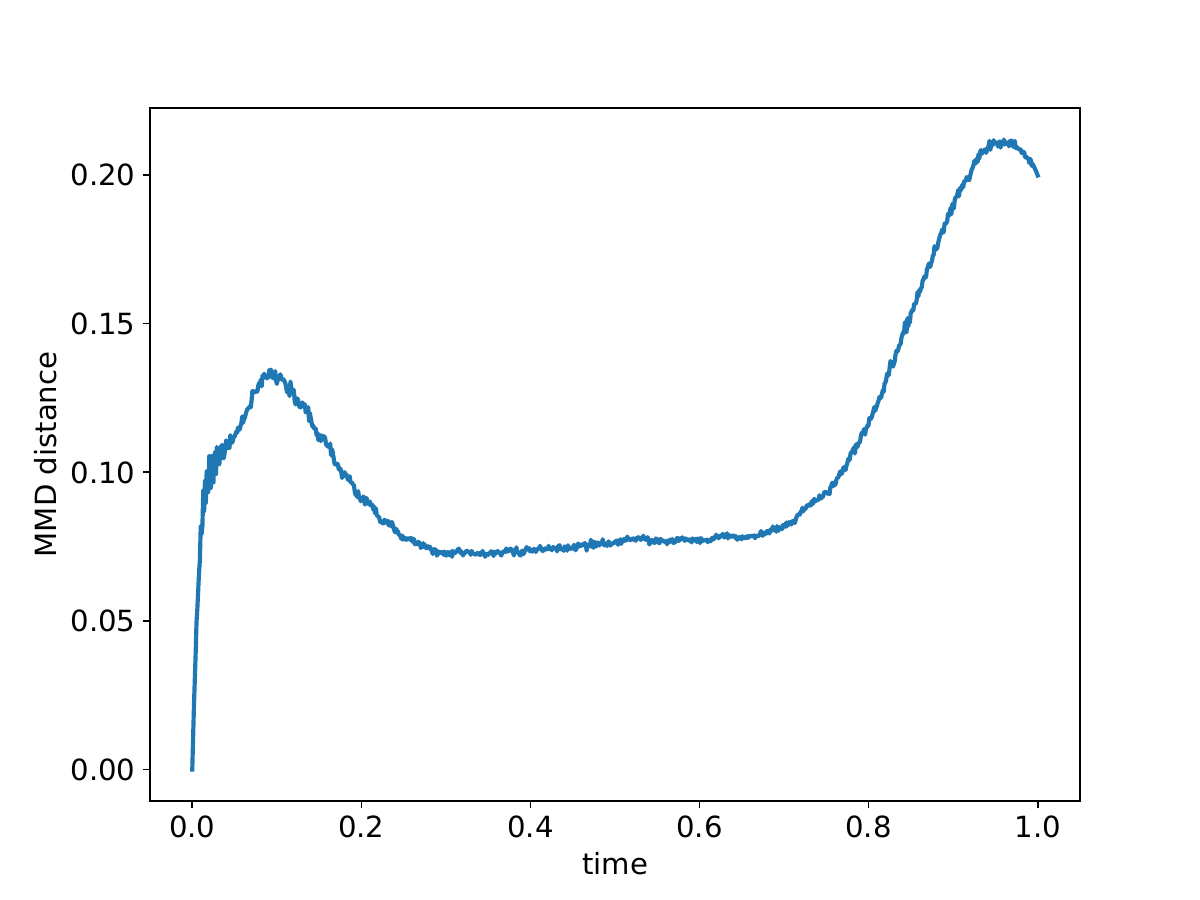}
          \hspace{1pt}
          \includegraphics[width=0.32\textwidth,trim={5 5 55 35},clip]{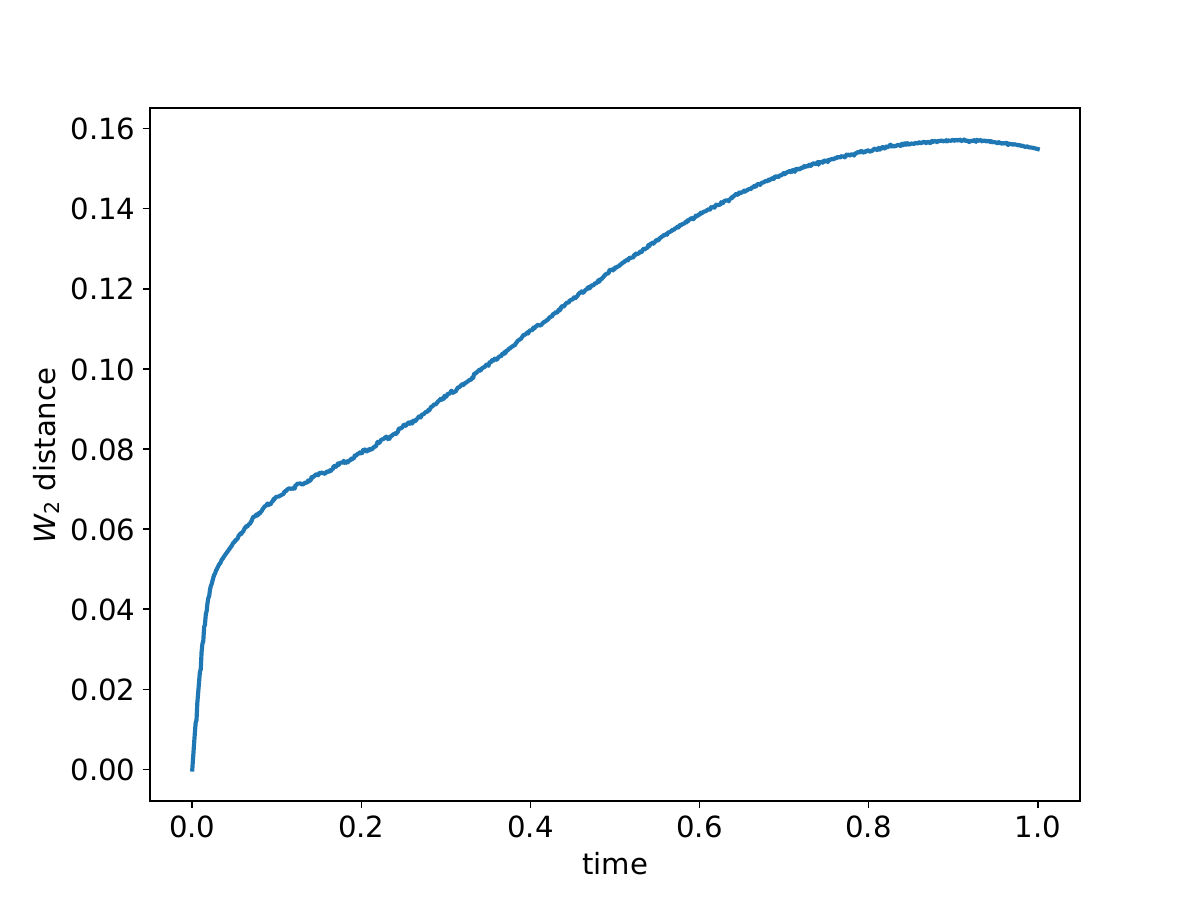}
 
    \caption{Numerical results for four dimensional (first row) and eight dimensional (second row) mass-springs systems. The left panel shows the approximated and exact densities, at $t=1$, projected onto the last two components. The panel at the center shows the MMD distance, and the right panel shows the W2 distance between the generated samples and training samples as a function of time t.}
    \label{fig:high dimensional examples}
\end{figure}
\section{Conclusion}
In this work, we introduced a novel framework for flow matching within the context of deterministic and stochastic linear control systems. By leveraging the structure of these systems, we developed an efficient methodology to steer an initial probability distribution to a desired target distribution while adhering to control constraints. The numerical experiments demonstrated the effectiveness and scalability of our approach across various system parameters and distribution classes. Future research could explore performance enhancements and extensions to control-affine systems or systems with state constraints,  to further broaden the applicability of this methodology.


\newpage
 \acks{The authors Y. Mei, M. Al-Jarrah, and A. Taghvaei are supported by the
National Science Foundation (NSF) award 2318977 and 2347358. The author Y. Chen is supported by the NSF awards 1942523 and 2008513. The authors thank Bamdad Hosseini for  his  valuable insights.}

\bibliography{references,references-thermo}

\begin{thebibliography}{40}
\providecommand{\natexlab}[1]{#1}
\providecommand{\url}[1]{\texttt{#1}}
\expandafter\ifx\csname urlstyle\endcsname\relax
  \providecommand{\doi}[1]{doi: #1}\else
  \providecommand{\doi}{doi: \begingroup \urlstyle{rm}\Url}\fi

\bibitem[Albergo and Vanden-Eijnden(2022)]{albergo2022building}
Michael~S Albergo and Eric Vanden-Eijnden.
\newblock Building normalizing flows with stochastic interpolants.
\newblock \emph{arXiv preprint arXiv:2209.15571}, 2022.

\bibitem[Alspach and Sorenson(1972)]{alspach1972nonlinear}
Daniel Alspach and Harold Sorenson.
\newblock Nonlinear bayesian estimation using gaussian sum approximations.
\newblock \emph{IEEE transactions on automatic control}, 17\penalty0 (4):\penalty0 439--448, 1972.

\bibitem[Anderson(1982)]{anderson1982reverse}
Brian~DO Anderson.
\newblock Reverse-time diffusion equation models.
\newblock \emph{Stochastic Processes and their Applications}, 12\penalty0 (3):\penalty0 313--326, 1982.

\bibitem[Basar et~al.(2020)Basar, Meyn, and Perkins]{basar2020lecture}
Tamer Basar, Sean Meyn, and William~R Perkins.
\newblock Lecture notes on control system theory and design.
\newblock \emph{arXiv preprint arXiv:2007.01367}, 2020.

\bibitem[Bensoussan et~al.(2013)Bensoussan, Frehse, Yam, et~al.]{bensoussan2013mean}
Alain Bensoussan, Jens Frehse, Phillip Yam, et~al.
\newblock \emph{Mean field games and mean field type control theory}, volume 101.
\newblock Springer, 2013.

\bibitem[Brockett(2012)]{brockett2012notes}
Roger Brockett.
\newblock Notes on the control of the liouville equation.
\newblock In \emph{Control of partial differential equations}, pages 101--129. Springer, 2012.

\bibitem[Brockett(2007)]{brockett2007optimal}
Roger~W Brockett.
\newblock Optimal control of the liouville equation.
\newblock \emph{AMS IP Studies in Advanced Mathematics}, 39:\penalty0 23, 2007.

\bibitem[Carmona et~al.(2018)Carmona, Delarue, et~al.]{carmona2018probabilistic}
Ren{\'e} Carmona, Fran{\c{c}}ois Delarue, et~al.
\newblock \emph{Probabilistic theory of mean field games with applications I-II}.
\newblock Springer, 2018.

\bibitem[Chen et~al.(2020)Chen, Qi, Zhuang, Wang, Liao, and Han]{chen2020mean}
Dezhi Chen, Qi~Qi, Zirui Zhuang, Jingyu Wang, Jianxin Liao, and Zhu Han.
\newblock Mean field deep reinforcement learning for fair and efficient {UAV} control.
\newblock \emph{IEEE Internet of Things Journal}, 8\penalty0 (2):\penalty0 813--828, 2020.

\bibitem[Chen(2023)]{chen2023density}
Yongxin Chen.
\newblock Density control of interacting agent systems.
\newblock \emph{IEEE Transactions on Automatic Control}, 69\penalty0 (1):\penalty0 246--260, 2023.

\bibitem[Chen and Georgiou(2015)]{chen2015stochastic}
Yongxin Chen and Tryphon Georgiou.
\newblock Stochastic bridges of linear systems.
\newblock \emph{IEEE Transactions on Automatic Control}, 61\penalty0 (2):\penalty0 526--531, 2015.

\bibitem[Chen et~al.(2015)Chen, Georgiou, and Pavon]{chen2015optimal}
Yongxin Chen, Tryphon~T Georgiou, and Michele Pavon.
\newblock Optimal steering of a linear stochastic system to a final probability distribution, part ii.
\newblock \emph{IEEE Transactions on Automatic Control}, 61\penalty0 (5):\penalty0 1170--1180, 2015.

\bibitem[Chen et~al.(2016{\natexlab{a}})Chen, Georgiou, and Pavon]{chen2016optimal}
Yongxin Chen, Tryphon~T Georgiou, and Michele Pavon.
\newblock Optimal transport over a linear dynamical system.
\newblock \emph{IEEE Transactions on Automatic Control}, 62\penalty0 (5):\penalty0 2137--2152, 2016{\natexlab{a}}.

\bibitem[Chen et~al.(2016{\natexlab{b}})Chen, Georgiou, and Pavon]{chen2016relation}
Yongxin Chen, Tryphon~T Georgiou, and Michele Pavon.
\newblock On the relation between optimal transport and schr{\"o}dinger bridges: A stochastic control viewpoint.
\newblock \emph{Journal of Optimization Theory and Applications}, 169:\penalty0 671--691, 2016{\natexlab{b}}.

\bibitem[Chen et~al.(2018)Chen, Georgiou, and Pavon]{chen2018steering}
Yongxin Chen, Tryphon~T Georgiou, and Michele Pavon.
\newblock Steering the distribution of agents in mean-field games system.
\newblock \emph{Journal of Optimization Theory and Applications}, 179\penalty0 (1):\penalty0 332--357, 2018.

\bibitem[Chen et~al.(2019)Chen, Georgiou, and Tannenbaum]{chen2019stochastic}
Yongxin Chen, Tryphon~T Georgiou, and Allen Tannenbaum.
\newblock Stochastic control and nonequilibrium thermodynamics: Fundamental limits.
\newblock \emph{IEEE transactions on automatic control}, 65\penalty0 (7):\penalty0 2979--2991, 2019.

\bibitem[Chen et~al.(2021)Chen, Karlsson, and Ringh]{9491012}
Yongxin Chen, Johan Karlsson, and Axel Ringh.
\newblock Optimal transport for applications in control and estimation.
\newblock \emph{IEEE Control Systems Magazine}, 41\penalty0 (4):\penalty0 28--33, 2021.
\newblock \doi{10.1109/MCS.2021.3076390}.

\bibitem[Dai~Pra(1991)]{dai1991stochastic}
Paolo Dai~Pra.
\newblock A stochastic control approach to reciprocal diffusion processes.
\newblock \emph{Applied mathematics and Optimization}, 23\penalty0 (1):\penalty0 313--329, 1991.

\bibitem[Elamvazhuthi and Berman(2019)]{elamvazhuthi2019mean}
Karthik Elamvazhuthi and Spring Berman.
\newblock Mean-field models in swarm robotics: A survey.
\newblock \emph{Bioinspiration \& Biomimetics}, 15\penalty0 (1):\penalty0 015001, 2019.

\bibitem[Fornasier and Solombrino(2014)]{fornasier2014mean}
Massimo Fornasier and Francesco Solombrino.
\newblock Mean-field optimal control.
\newblock \emph{ESAIM: Control, Optimisation and Calculus of Variations}, 20\penalty0 (4):\penalty0 1123--1152, 2014.

\bibitem[Fu et~al.(2021)Fu, Taghvaei, Chen, and Georgiou]{fu2021maximal}
Rui Fu, Amirhossein Taghvaei, Yongxin Chen, and Tryphon~T Georgiou.
\newblock Maximal power output of a stochastic thermodynamic engine.
\newblock \emph{Automatica}, 123:\penalty0 109366, 2021.

\bibitem[Gao et~al.(2022)Gao, Lee, Kang, Li, Han, Osher, and Poor]{9735297}
Hao Gao, Wonjun Lee, Yuhan Kang, Wuchen Li, Zhu Han, Stanley Osher, and H.~Vincent Poor.
\newblock Energy-efficient velocity control for massive numbers of uavs: A mean field game approach.
\newblock \emph{IEEE Transactions on Vehicular Technology}, 71\penalty0 (6):\penalty0 6266--6278, 2022.
\newblock \doi{10.1109/TVT.2022.3158896}.

\bibitem[Haasler et~al.(2021)Haasler, Karlsson, and Ringh]{haasler2021control}
Isabel Haasler, Johan Karlsson, and Axel Ringh.
\newblock Control and estimation of ensembles via structured optimal transport.
\newblock \emph{IEEE Control Systems Magazine}, 41\penalty0 (4):\penalty0 50--69, 2021.

\bibitem[Haussmann and Pardoux(1986)]{haussmann1986time}
Ulrich~G Haussmann and Etienne Pardoux.
\newblock Time reversal of diffusions.
\newblock \emph{The Annals of Probability}, pages 1188--1205, 1986.

\bibitem[Huang et~al.(2006)Huang, Malham{\'e}, and Caines]{huang2006large}
Minyi Huang, Roland~P Malham{\'e}, and Peter~E Caines.
\newblock Large population stochastic dynamic games: closed-loop mckean-vlasov systems and the nash certainty equivalence principle.
\newblock \emph{Communications in Information \& Systems}, 6\penalty0 (3):\penalty0 221 -- 252, 2006.

\bibitem[Lasry and Lions(2007)]{lasry2007mean}
Jean-Michel Lasry and Pierre-Louis Lions.
\newblock Mean field games.
\newblock \emph{Japanese journal of mathematics}, 2\penalty0 (1):\penalty0 229--260, 2007.

\bibitem[Lipman et~al.(2022)Lipman, Chen, Ben-Hamu, Nickel, and Le]{lipman2022flow}
Yaron Lipman, Ricky~TQ Chen, Heli Ben-Hamu, Maximilian Nickel, and Matt Le.
\newblock Flow matching for generative modeling.
\newblock \emph{arXiv preprint arXiv:2210.02747}, 2022.

\bibitem[Liu et~al.(2023)Liu, Lipman, Nickel, Karrer, Theodorou, and Chen]{liu2023generalized}
Guan-Horng Liu, Yaron Lipman, Maximilian Nickel, Brian Karrer, Evangelos~A Theodorou, and Ricky~TQ Chen.
\newblock Generalized schr$\backslash$" odinger bridge matching.
\newblock \emph{arXiv preprint arXiv:2310.02233}, 2023.

\bibitem[Liu et~al.(2022)Liu, Gong, and Liu]{liu2022flow}
Xingchao Liu, Chengyue Gong, and Qiang Liu.
\newblock Flow straight and fast: Learning to generate and transfer data with rectified flow.
\newblock \emph{arXiv preprint arXiv:2209.03003}, 2022.

\bibitem[Liu et~al.(2018)Liu, Wu, and Lin]{liu2018mean}
Zhiyu Liu, Bo~Wu, and Hai Lin.
\newblock A mean field game approach to swarming robots control.
\newblock In \emph{2018 Annual American Control Conference (ACC)}, pages 4293--4298. IEEE, 2018.

\bibitem[Mei and Taghvaei(2024)]{mei2024time}
Yuhang Mei and Amirhossein Taghvaei.
\newblock Time-reversal solution of bsdes in stochastic optimal control: a linear quadratic study.
\newblock \emph{arXiv preprint arXiv:2410.04615}, 2024.

\bibitem[Movilla~Miangolarra(2023)]{movilla2023}
Olga Movilla~Miangolarra.
\newblock \emph{Stochastic Thermodynamic Treatment of Thermal Anisotropy}.
\newblock University of California, Irvine, 2023.

\bibitem[Pavon(1989)]{pavon1989stochastic}
Michele Pavon.
\newblock Stochastic control and nonequilibrium thermodynamical systems.
\newblock \emph{Applied Mathematics and Optimization}, 19:\penalty0 187--202, 1989.

\bibitem[Pavon and Wakolbinger(1991)]{pavon1991free}
Michele Pavon and Anton Wakolbinger.
\newblock On free energy, stochastic control, and schr{\"o}dinger processes.
\newblock In \emph{Modeling, Estimation and Control of Systems with Uncertainty: Proceedings of a Conference held in Sopron, Hungary, September 1990}, pages 334--348. Springer, 1991.

\bibitem[Peliti and Pigolotti(2021)]{peliti2021stochastic}
Luca Peliti and Simone Pigolotti.
\newblock \emph{Stochastic Thermodynamics: An Introduction}.
\newblock Princeton University Press, 2021.

\bibitem[Salhab et~al.(2019)Salhab, Malham{\'e}, and Le~Ny]{salhab2019collective}
Rabih Salhab, Roland~P Malham{\'e}, and Jerome Le~Ny.
\newblock Collective stochastic discrete choice problems: A min-lqg dynamic game formulation.
\newblock \emph{IEEE Transactions on Automatic Control}, 65\penalty0 (8):\penalty0 3302--3316, 2019.

\bibitem[Seifert(2012)]{seifert2012stochastic}
Udo Seifert.
\newblock Stochastic thermodynamics, fluctuation theorems and molecular machines.
\newblock \emph{Reports on progress in physics}, 75\penalty0 (12):\penalty0 126001, 2012.

\bibitem[Sekimoto(2010)]{sekimoto2010stochastic}
Ken Sekimoto.
\newblock \emph{Stochastic energetics}, volume 799.
\newblock Springer, 2010.

\bibitem[Tong et~al.(2023)Tong, Fatras, Malkin, Huguet, Zhang, Rector-Brooks, Wolf, and Bengio]{tong2023improving}
Alexander Tong, Kilian Fatras, Nikolay Malkin, Guillaume Huguet, Yanlei Zhang, Jarrid Rector-Brooks, Guy Wolf, and Yoshua Bengio.
\newblock Improving and generalizing flow-based generative models with minibatch optimal transport.
\newblock \emph{arXiv preprint arXiv:2302.00482}, 2023.

\bibitem[Zhou(2021)]{zhou2021optimal}
Haomin Zhou.
\newblock Optimal transport on networks.
\newblock \emph{IEEE Control Systems Magazine}, 41\penalty0 (4):\penalty0 70--81, 2021.

\end{thebibliography}

\end{document}